\renewcommand{\figurename}
\newtheorem{theorem}{Theorem}[section]
\newtheorem{lemma}[theorem]{Lemma}
\newtheorem{corollary}[theorem]{Corollary}
\newtheorem{conjecture}[theorem]{Conjecture}
\begin{document}
\title{\Large Gallai's path decomposition conjecture for Cartesian product of graphs\footnote{Research supported by NSFC (No.12061073)}}

\author{ {Xiaohong Chen\footnote{Email: xhongchen0511@163.com (X. Chen)}, Baoyindureng Wu\footnote{Corresponding author.
Email: baoywu@163.com (B. Wu) }  }\\
\small  \small College of Mathematics and System Science, Xinjiang
University \\ \small  Urumqi 830046, P.R.China \\}
\date{}

\maketitle

{\small \noindent{\bfseries Abstract}:
Let $G$ be a graph of order $n$. A path decomposition $\mathcal{P}$ of $G$ is a collection of edge-disjoint paths that covers all the edges of $G$. Let $p(G)$ denote the minimum number of paths
needed in a path decomposition of $G$. Gallai conjectured that if $G$ is connected, then $p(G)\leq \lceil\frac{n}{2}\rceil$. Let $n_o(G)$ to denote the number of vertices with odd degree in $G$.
Lov\'{a}sz proved that if $G$ is a connected graph with all vertices having degree odd, i.e. $n_o(G)=n$, then $p(G)=\frac n 2$.

In this paper, we prove that if $G$ is a connected graph of order $m\geq 2$ with $p(G)=\frac{n_o(G)}{2}$ and $H$ is a connected graph of order $n$, then $p(G\Box H)\leq\frac{mn}{2}$. Furthermore, we prove that $p(G)=\frac{n_o(G)}{2}$, if one of the following is hold:

(\romannumeral1) $G$ is a tree;

(\romannumeral2) $G=P_n\Box T$, where $n\geq 4$ and $T$ is a tree;

(\romannumeral3) $G=P_n\Box H$, where $H$ is an even graph.
\\{\bfseries Keywords}: Path; Decomposition; Gallai's conjecture; Cartesian product

\section {\large Introduction}

All graphs considered here are finite, undirected and simple. We refer to \cite{Bondy} for unexplained terminology and notation. Let $G=(V(G), E(G))$ be a graph. The order $|V(G)|$ and size $|E(G)|$ are denoted by $n(G)$ and $e(G)$, respectively. The degree and the neighborhood of a vertex $v$ are denoted by $d_G(v)$ and $N_G(v)$, respectively.  The maximum degree of $G$ is denoted by $\Delta(G)$. A vertex is called {\it odd} or {\it even} depending on whether its degree is odd or even, respectively. A graph in which every vertex is odd or even is called an {\it odd graph} or an {\it even graph}. The number of odd vertices and even vertices of $G$ are denoted by $n_o(G)$ and $n_e(G)$, respectively. As usual, we use $P_n$ to denote the path of order $n$.

A {\it path decomposition $\mathcal{P}$} of a graph $G$ is a collection of edge-disjoint paths that covers all the edges of $G$. We use $p(G)$ to denote the minimum number of paths needed for a path decomposition of $G$. Erd\H{o}s asked that what is the minimum number of paths into which every connected graph can be decomposed. As a response to the question of Erd\H{o}s, Gallai made the following conjecture:

\begin{conjecture} [\cite{Lovasz1968}] If $G$ is a connected graph of order $n$, then $G$ can be decomposed into at most $\lceil\frac{n}{2}\rceil$ paths.
\end{conjecture}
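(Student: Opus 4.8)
The plan is to proceed by induction, using the number of even vertices $n_e(G)$ as the main parameter and leaning on Lov\'{a}sz's theorem for the base case. When $n_e(G)=0$ every vertex is odd, so by handshaking $n$ is even and Lov\'{a}sz's result gives $p(G)=\frac{n}{2}=\lceil\frac{n}{2}\rceil$, exactly the desired bound. The real content lies in the inductive step, where I would try to remove or tame the even vertices one at a time while keeping the number of paths under control. The governing intuition is that odd vertices are ``cheap'': each of the $\lceil n/2\rceil$ paths can be arranged to end at odd vertices, so $\frac{n_o(G)}{2}$ paths are morally both forced and sufficient, whereas even vertices are precisely what generate cycles, and cycles are the obstruction.

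To make the induction precise I would invoke the stronger form of Lov\'{a}sz's theorem: every connected graph on $n$ vertices decomposes into at most $\lfloor n/2\rfloor$ paths and cycles. If such a decomposition uses no cycles we are done, so the whole difficulty is concentrated in converting cycles into paths without inflating the count. The natural local move is to operate at an even vertex $v$: split the edges at $v$ into pairs, which either merges two paths meeting at $v$ into one longer path, or, when $v$ lies on a cycle $C$ and also on a path $Q$ through $v$, lets me cut $C$ open and splice it onto $Q$, in each case replacing two objects by one. Iterating such lifting and splicing steps, I would aim to charge the cost of every surviving cycle to a nearby odd vertex and then attach the remaining pendant structure to the path system at a shared vertex. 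Reducing $G$ to a smaller graph $G'$ (by deleting a carefully chosen path, or by splitting an even vertex into two odd vertices of a graph of smaller order) and applying the inductive hypothesis would then close the step, provided the order and the path count both move in the favorable direction.

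The hard part, and the reason this statement is the central open problem of the area rather than a routine exercise, is exactly the cycles that live entirely among even vertices. If the even subgraph $G_E$ induced by the even-degree vertices is a forest, then every cycle of $G$ meets an odd vertex and the charging argument above can be closed, recovering the known results of Lov\'{a}sz and Pyber; but once $G_E$ contains cycles of its own there is no odd vertex available to absorb them, the splicing operations can cascade and interfere, and no global accounting is known that keeps the total at $\lceil n/2\rceil$ in full generality. I do not expect to surmount this obstacle for arbitrary $G$. The realistic route, consistent with the abstract, is instead to establish the conjecture for structured families such as trees and the Cartesian products $P_n\Box T$ and $P_n\Box H$ with $H$ even, where $G_E$ is controlled and the cycles can be organized explicitly, and to prove the supporting product inequality $p(G\Box H)\le\frac{mn}{2}$ that reduces these cases to the identity $p(G)=\frac{n_o(G)}{2}$.
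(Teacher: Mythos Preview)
This statement is Gallai's conjecture itself, and the paper does \emph{not} prove it; it is cited as Conjecture~1.1 and remains open. There is therefore no ``paper's own proof'' to compare against. Your proposal is not a proof either, and to your credit you say so explicitly: you sketch the standard heuristic (Lov\'asz's path--cycle decomposition plus local splicing at even vertices), correctly isolate the obstruction (cycles contained entirely in the even subgraph $G_E$), and then concede that the argument does not close in general.

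Where your write-up goes astray is only in framing: you were asked for a proof of the displayed statement, and what you produced is a discussion of why no such proof is known, followed by a pivot to the special cases the paper actually establishes. That pivot is appropriate for the paper as a whole but does not address the conjecture. If the intent was to record that the statement is a conjecture and to motivate the restricted results of Sections~3--5, say that in one sentence rather than presenting an inductive plan that you then abandon. If the intent was genuinely to attempt a proof, the gap is exactly the one you name: once $G_E$ contains a cycle, your splicing/charging scheme has no odd vertex to anchor it, and nothing in the proposal repairs this.
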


The assertion of Gallai's conjecture is sharp if it is true: If $G$ is a graph in which every vertex has odd degree, then in any path decomposition of $G$ each vertex must be the end vertex of some path, and so at least $\lceil\frac{n}{2}\rceil$ paths are required. In 1968, Lov\'{a}sz \cite{Lovasz1968} proved the following theorem and its corollary.

\begin{theorem} [\cite{Lovasz1968}] Every graph of order $n$ can be decomposed into at most $\frac{n}{2}$ paths and cycles.
\end{theorem}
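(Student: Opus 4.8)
The plan is to prove the statement by induction on $n$, after a few preliminary reductions. Since the components of $G$ can be decomposed independently, and since an isolated vertex may be discarded without increasing the target $\tfrac n2$, it suffices to treat a connected graph $G$ with minimum degree at least $1$ and $n\ge 2$; and if $\Delta(G)\le 2$ then $G$ is itself a single path or a single cycle, so one part suffices. Next I would peel off low-degree vertices by local surgery. A vertex of degree $2$ is \emph{suppressed} (deleted and replaced by an edge joining its two neighbours, possibly a parallel edge, so one works with multigraphs throughout): decompose the smaller graph by induction and re-insert the vertex into whichever path or cycle uses the new edge, with no change in the number of parts. If some vertex $u$ has two leaf neighbours $v,v'$, delete both, decompose $G-v-v'$ into at most $\tfrac{n-2}2$ parts, and add the path $v\,u\,v'$. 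If a leaf $v$ has a neighbour $u$ with $d_G(u)$ even, delete $v$, decompose $G-v$, and extend by the edge $vu$ the path ending at $u$ --- such a path exists because $d_{G-v}(u)$ is odd, so $u$ is an endpoint of an odd number of paths.

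In the cases not settled above I would use a parity decomposition. Let $O$ be the set of odd-degree vertices of $G$, with $|O|=2k$, and let $F\subseteq E(G)$ have $O$ as its set of odd-degree vertices; a minimal such $F$ is a forest all of whose leaves lie in $O$, and a forest with $2k$ odd-degree vertices splits into exactly $k$ paths pairing up $O$. Then $G-F$ has all degrees even, so it decomposes into edge-disjoint cycles (a classical fact about even graphs), and adjoining the $k$ paths obtained from $F$ yields a path/cycle decomposition of $G$. An equivalent and often more convenient route is to first transform $G$ into a graph all of whose vertices are odd --- say by attaching one pendant edge at each even vertex --- then apply Lov\'asz's theorem quoted in the Introduction, that a connected graph on $2k$ vertices with every degree odd decomposes into exactly $k$ paths, and finally strip the pendant edges off again.

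The crux, and the step I expect to be the main obstacle, is the bookkeeping that keeps the total at most $\tfrac n2$. A minimal $T$-join leaves an even graph on almost all $n$ vertices, for which the general bound only gives about $\tfrac n2$ cycles, whereas after spending $k$ paths the budget is merely $\tfrac n2-k=\tfrac{n_e(G)}2$; symmetrically, deleting the pendant edges from an all-odd decomposition can create up to $\tfrac{n+n_e(G)}2$ paths, an overshoot of $\tfrac{n_e(G)}2$. So one must be careful: either choose $F$ (and the cycle decomposition of $G-F$) so that every formerly-odd vertex is isolated in $G-F$, or, in the pendant-edge version, accompany each deletion of a pendant edge at a vertex $v$ by a merge of two paths that afterwards meet at $v$ --- two paths sharing only one endpoint merge into a longer path, and two sharing both endpoints merge into a cycle --- all the while checking that no part repeats a vertex. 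Organising these merges, for instance through an inner induction in the case of minimum degree at least $3$ in which one splits a vertex of large or even degree and verifies that reassembling the pieces does not increase the number of parts, is the substantive content; the rest is the routine casework above.
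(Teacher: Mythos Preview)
The paper does not supply a proof of this theorem; it is quoted from Lov\'asz (1968) and used as a black box, so there is no in-paper argument to compare yours against.

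As a standalone proof, your proposal has a genuine gap---one you yourself name: the bookkeeping in the main step. Both routes you sketch overshoot the target. The $T$-join route spends $k=\tfrac{n_o(G)}{2}$ paths on pairing the odd vertices and then must cover an even graph on essentially all $n$ vertices using at most $\tfrac{n}{2}-k=\tfrac{n_e(G)}{2}$ cycles; no general bound delivers that, and your suggestion of choosing $F$ so that every formerly odd vertex becomes isolated in $G-F$ is generally impossible (it would force $F$ to contain all edges incident to odd vertices). The pendant-edge route is worse, because it is circular as written: the statement you invoke---that a connected graph with all $2k$ vertices of odd degree decomposes into exactly $k$ paths---is Corollary~1.3 in the paper, and the paper deduces that corollary \emph{from} the very theorem you are trying to prove. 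Your preliminary reductions (suppressing degree-$2$ vertices, pruning leaves) are sound but only dispose of easy cases; the substantive content of Lov\'asz's argument is an induction carried out locally at a vertex of maximum degree, not a global parity split, and nothing in your sketch replaces that step.
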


Clearly, $p(G)\geq \max\{\frac{n_o(G)} 2, \frac{\Delta(G)} 2 \}$ for any graph $G$.

\begin{corollary} [\cite{Lovasz1968}] Every odd graph of order $n$ can be decomposed into $\frac{n}{2}$ paths.
\end{corollary}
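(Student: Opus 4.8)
\noindent Here is how I would prove the Corollary. The plan is to obtain it as a quick consequence of Lov\'asz's theorem stated above, the decisive point being that when every vertex of $G$ has odd degree, a decomposition into paths and cycles is forced to contain no cycle at all.

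First I would record that since $G$ is an odd graph, $n=n_o(G)$ is even (the number of odd-degree vertices of any graph is even), so $\frac n2$ is an integer; moreover the lower bound noted just above gives $p(G)\ge\frac{n_o(G)}{2}=\frac n2$. It therefore suffices to exhibit a path decomposition of $G$ using at most $\frac n2$ paths. I would apply Lov\'asz's theorem to get a decomposition $\mathcal{D}$ of $G$ into $k$ paths and $\ell$ cycles with $k+\ell\le\frac n2$, and then argue that $k\ge\frac n2$; this immediately forces $\ell=0$, so that $\mathcal{D}$ itself is the desired path decomposition, in fact with exactly $\frac n2$ paths.

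To get $k\ge\frac n2$, fix any vertex $v$ and consider how $\mathcal{D}$ partitions the $d_G(v)$ edges incident with $v$: each cycle of $\mathcal{D}$ through $v$ uses two of these edges, each path of $\mathcal{D}$ having $v$ as an interior vertex uses two, and each path of $\mathcal{D}$ having $v$ as an endpoint uses exactly one. Reducing modulo $2$, the number of paths of $\mathcal{D}$ with endpoint $v$ is congruent to $d_G(v)$, which is odd, hence is at least $1$. Thus each of the $n$ vertices of $G$ occurs among the at most $2k$ endpoints of the paths of $\mathcal{D}$, so $2k\ge n$, i.e. $k\ge\frac n2$.

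I do not expect a real obstacle here beyond this parity bookkeeping at each vertex; the only subtlety worth flagging is that Lov\'asz's theorem bounds the \emph{combined} number of paths and cycles, so one genuinely has to rule cycles out, rather than trying to splice each cycle onto a path (which need not even produce a path, since a cycle and a path meeting at $v$ may share further vertices).
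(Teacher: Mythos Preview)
Your argument is correct and is exactly the standard derivation of the corollary from Theorem~1.2: the parity count at each odd vertex forces every vertex to be an endpoint of some path in a Lov\'asz path--cycle decomposition, whence the paths alone already number at least $\frac n2$ and no cycles can appear. Note, however, that the paper does not supply its own proof of this statement; it is quoted as Corollary~1.3 with a citation to \cite{Lovasz1968}, so there is no in-paper argument to compare against beyond observing that your proof is the intended (and essentially only) way to read it off from the theorem immediately preceding it.
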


In 1980, Donald \cite{Donald1980} showed that if $G$ is allowed to be disconnected, then $p(G)\leq \lfloor\frac{3n}{4}\rfloor$, which was improved to $\lfloor\frac{2n}{3}\rfloor$
independently by Dean and Kouider \cite{Dean2000} and Yan \cite{Yan1998}. Furthermore, the Conjecture 1.1 was verified for several classes of graphs. Pyber \cite{Pyber1996} extended Lov\'{a}sz's
result by proving that Conjecture 1.1 holds for graphs whose each cycle contains a vertex of odd degree. Fan \cite{Fan2005} proved that the conjecture is true if each block of the even subgraph is a triangle-free graph with maximum degree at most
three. Botler and Sambinelli \cite{Botler2020(1)} generalized Fan's result in \cite{Fan2005}.
Also, Botler and
Jim\'{e}nez \cite{Botler2017} showed Conjecture 1.1 is ture for a family of even regular graphs with a high girth condition. Harding and McGuinness\cite{Harding2014} proved that for any simple graph $G$ having girth $g\geq 4$, there is a path decomposition of $G$ having at most $\frac{n_o(G)}{2}+\lfloor(\frac{g+1}{2g})n_e(G)\rfloor$ paths.
Recently, Chu, Fan and Zhou \cite{Chu2022} proved that for any triangle-free graph $G$, $p(G)\leq \lfloor\frac{3n}{5}\rfloor$. More known results for Conjecture 1.1 can be found in \cite{Bonamy2019, Botler2019, Botler2020(2), Chu2021, Fan2005, Geng2015, Jimenez2017}.
But in general, it is still open.

The Cartesian product of simple graphs $G$ and $H$ is the graph $G\Box H$ whose vertex set is $V(G)\times V(H)$ and whose edge set is the set of all pairs $(u_{1} ,v_{1})(u_{2} ,v_{2})$
such that either $u_{1}u_{2}\in E(G)$ and $v_{1}=v_{2}$, or $v_{1}v_{2}\in E(H)$ and $u_{1}=u_{2}$. Every connected graph has a unique factorization under this graph product \cite{Sabidussi1960}, and this factorization can be found in linear time and space \cite{Imrich2007}. In 2018, Oyewumi, Akwu and Azer \cite{Oyewumi2018} determined the number of paths needed for a path decomposition of cartesian product of path and cycle. In 2016, Jeevadoss and Muthusamy \cite{Jeevadoss2016} gave a necessary and sufficient condition for the existence of $\{P_5, C_4\}_{\{p,q\}}$-decomposition of Cartesian product of complete graphs, where $G$ has a $\{H_1, H_2\}_{\{p,q\}}$-decomposition means that for all values of $p$ and $q$ satisfying trivial necessary conditions, $G$ has a decomposition into $p$ copies of $H_1$ and $q$ copies of $H_2$.
In 2021, Ezhilarasi and Muthusamy \cite{Ezhilarasi2021} provided a necessary and sufficient conditions for the existence of a complete $\{P_5, S_5\}$-decomposition of Cartesian product of complete graphs.
In recent years, there are many interesting studies on Cartesian product graphs, and some related results can be found in \cite{Anderson2022, Hammack2011, Imrich2008, Kaul2023, Rall2023}.

%In 2023, Rall \cite{Rall2023} gave a complete characterization of well-dominated Cartesian products in which at least one of the factors is a complete graph. A graph is called {\it well-dominated} if each minimal dominating set in a graph is a minimum dominating set.
%Recently, domination number [38¨C41], respectively, the total domination number [42] of the Cartesian product of two directed cycles are investigated.

In this paper, we prove that if $p(G)=\frac{n_o(G)}{2}$, then $p(G\Box H)\leq\frac{mn}{2}$, where $G$ is a graph of order $m\geq 2$ and $H$ is a graph of order $n$. Furthermore, we prove that $p(G)=\frac{n_o(G)}{2}$, if one of the following is hold:
(\romannumeral1) $G$ is a tree;
(\romannumeral2) $G=P_n\Box T$, where $n\geq 4$ and $T$ is a tree;
(\romannumeral3) $G=P_n\Box H$, where $H$ is an even graph.

%if $G$ is a tree or $G=P_n\Box T$ $(n\geq 4)$, or $G=P_n\Box H$ with $H$ is an even graph.

The paper is organized as follows. In Section 2, we present some definitions,
notation and state an auxiliary result needed in the proof of our main results. In Section 3, we prove that Conjecture 1.1 holds for the Cartesian product of a general graph and a path.
In Sections 4 and 5, we prove our main results.
%Conjecture 1.1 holds for Cartesian product of some graphs.

\section{\large Notation and auxiliary results}

At the beginning of this section, we will first introduce some notations to be used in this paper. Let $S\subseteq V(G)$. The subgraph induced by $S$, denoted by $G[S]$, is the graph with $S$ as its vertex set, in which two vertices are adjacent if and only if they are adjacent in $G$.  The {\it union} of simple graphs $G$ and $H$ is the graph $G\cup H$ with vertex set $V(G) \cup V(H)$ and edge set $E(G) \cup E(H)$. We use $G+uv$ to denote the graph that arises from $G$ by adding an edge $uv\notin E(G)$, where $u, v \in V(G)$. Similarly, $G-u$ is a graph that arises from $G$ by deleting the vertex $u \in V(G)$.

Let $u,v\in V(G)$. The path with ends $u$ and $v$ is denoted by $P_{u,v}$. If $P_{u,v}$ is subgraph of $G$ and $u$ is odd in $G$, we call that $u$ is an {\it odd end vertex} of $P_{u,v}$, otherwise, it is an {\it even end vertex} of $P_{u,v}$. If both $u$ and $v$ are odd in $G$, we call $P_{u,v}$ is an {\it odd-odd path}. If both $u$ and $v$ are even in $G$, then $P_{u,v}$ is an {\it even-even path}. If the degree of $u$ and $v$ have different parity, we call $P_{u,v}$ is an {\it odd-even path}. To {\it subdivide} an edge $e$ is to delete $e$, add a new vertex $x$, and join $x$ to the ends of $e$.

%A sequence of vertices and edges $W := v_1 e_1 v_2 ...v_{l} e_l v_{l+1}$ is called a {\it trail} if $e_i=v_{i}v_{i+1}\in E(W)$ for each $i\in \{1,2,\cdots,l\}$ and $e_i\neq e_j$ if $i\neq j$. If $v_1=v_{l+1}$, then $W$ is {\it closed trail}. A trail $W$ is called a {\it$\mathcal{P}$-$r$-trail} if $W$ is the union of $r$ distinct paths $P_{l_1}, P_{l_2},\cdots, P_{l_r}$ of $\mathcal{P}$. We write $W=P_{l_1}\cup P_{l_2}\cup\cdots \cup P_{l_r}$. If $u_1=u_{r+1}$, then $W$ is {\it$\mathcal{P}$-$r$-closed-trail}.

Now, we will prove a useful lemma.

%We begin with a simple observation.

\begin{lemma} For any connected graph $H$, there exists a path decomposition $\mathcal{P}(H)$ of the edges of $H$, such that:

(1) each odd vertex is the end vertex of exactly one path of $\mathcal{P}(H)$;

(2) each even vertex is the end vertex of exactly two path of $\mathcal{P}(H)$;

(3) $|\mathcal{P}(H)|=\frac{n_o(H)}{2}+n_e(H)$.
\end{lemma}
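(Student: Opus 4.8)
The plan is to reduce the statement to Lov\'asz's Corollary 1.4 by turning $H$ into a connected \emph{odd} graph. Form $H^{*}$ from $H$ by attaching, at every even vertex $v$ of $H$, a new pendant vertex $\ell_v$ together with the pendant edge $v\ell_v$. Then $H^{*}$ is connected and every vertex of $H^{*}$ is odd: original odd vertices keep their (odd) degree, original even vertices have their degree increased by $1$, and each $\ell_v$ has degree $1$. Moreover $n(H^{*})=n(H)+n_e(H)=n_o(H)+2n_e(H)$, so by Corollary 1.4 the graph $H^{*}$ decomposes into $\frac{n(H^{*})}{2}=\frac{n_o(H)}{2}+n_e(H)$ paths. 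Since every vertex of $H^{*}$ is odd, in any path decomposition it must be an end vertex of at least one path; as the number of end-vertex incidences equals $2\cdot\frac{n(H^{*})}{2}=n(H^{*})$, every vertex of $H^{*}$ is an end vertex of \emph{exactly} one path (and, in particular, no path is trivial). The obvious next move is to delete the $n_e(H)$ pendant edges and read off a path decomposition of $H$: a path of $H^{*}$ containing a pendant edge $v\ell_v$ must have $\ell_v$ as an end vertex, so deleting $v\ell_v$ (and $\ell_v$) yields a path of $H$ and leaves the remaining paths untouched.

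The main obstacle is the bookkeeping at even vertices after this deletion. If the pendant edge $v\ell_v$ happens to be a single-edge path on its own, deleting it destroys a path, drops the total count by one, and leaves $v$ the end vertex of \emph{no} remaining path, whereas condition (2) demands exactly two. So before deleting I would first reorganise the decomposition of $H^{*}$ by a sequence of local swaps: whenever $v\ell_v$ is a single-edge path, pick a path $P'$ that passes through $v$ internally (one exists, since $d_{H^{*}}(v)=d_H(v)+1\ge 3$ and $v$ is an end vertex of only $\{v\ell_v\}$, leaving at least two further edges at $v$ distributed among paths interior to $v$), write $P'=\dots a v b\dots$, and replace the two paths $P'$ and $\{v\ell_v\}$ by the two paths $\dots a v$ and $\ell_v v b\dots$. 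I would check that each such swap preserves the number of paths and the property ``every vertex is an end vertex of exactly one path,'' while strictly decreasing the number of pendant edges that form single-edge paths; hence after finitely many swaps we obtain a decomposition $\mathcal Q$ of $H^{*}$ with all the good properties in which, in addition, every pendant edge lies in a path of length at least $2$, with its even endpoint interior to that path.

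Finally I would delete all pendant edges from $\mathcal Q$ to obtain $\mathcal P(H)$. Each deletion merely shortens a path of length $\ge 2$ without destroying it, so $|\mathcal P(H)|=\frac{n_o(H)}{2}+n_e(H)$, which is exactly (3). An original odd vertex $u$ is an end vertex of exactly one path of $\mathcal Q$ and is incident with no pendant edge, so it remains an end vertex of exactly one path of $\mathcal P(H)$, giving (1). An original even vertex $v$ is an end vertex of exactly one path $Q_v$ of $\mathcal Q$, while its pendant edge $v\ell_v$ lies interior to a different path $R_v$ of length $\ge 2$; deleting $v\ell_v$ makes $v$ an end vertex of the surviving part of $R_v$ as well, so $v$ is an end vertex of exactly two paths of $\mathcal P(H)$, giving (2). (Conversely, (3) follows formally from (1) and (2) by counting end-vertex incidences: $2|\mathcal P(H)|=n_o(H)+2n_e(H)$.)

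I expect the only delicate point to be verifying that the swap in the second paragraph is ``well behaved'': that it keeps every vertex an end vertex of exactly one path and never creates a new single-edge pendant path, so that the process terminates. All remaining claims are routine parity and incidence counts. Throughout I assume $e(H)\ge 1$ (the case $H=K_1$ being vacuous under the usual convention that a decomposition covers the edge set).
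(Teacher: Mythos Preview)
Your proposal is correct and follows essentially the same approach as the paper: attach a pendant edge at each even vertex to obtain an odd graph, apply Lov\'asz's result (Corollary~1.3) to get a decomposition in which every vertex is the end of exactly one path, and then remove the pendant edges while repairing the even vertices. The only cosmetic difference is the order of the repair: the paper deletes the pendant edges first and then, for each even vertex $v$ left as the end of no path, splits some path through $v$ into two; you instead perform your swap \emph{before} deletion so that no pendant edge is a single-edge path, and then delete. Your termination and invariance checks for the swap are fine (note $a$ cannot be a pendant leaf since the only pendant leaf adjacent to $v$ is $\ell_v$), so both routes arrive at the same decomposition with the same count.
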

\begin{proof}
Let $H'$ be the graph obtained from $H$ by adding a pendent edge to each even vertex. Clearly, $H'$ is an odd graph with $n(H')=n_o(H)+2n_e(H)$. By Corollary 1.3, let $\mathcal{P}(H')$ be a path decomposition  of $H'$ with $p(H')=\frac{n_o(H)}{2}+n_e(H)$. It implies that each vertex of $H'$ is the end vertex of exactly one path of $\mathcal{P}(H')$. Deleting those new edges from paths in $\mathcal{P}(H')$ result in a path decomposition $\mathcal{P}_0(H)$ of $H$.

It is easy to check that for a vertex $v\in V(H)$, there exists exactly one path in $\mathcal{P}_0(H)$ with $v$ as its end if $d_H(v)$ is odd, and otherwise, either there exists no path with $v$ as its end, or there exist exactly two paths in $\mathcal{P}_0(H)$ with $v$ as its end. If $v\in V(H)$ is even and is not an end vertex of any path in $\mathcal{P}_0(H)$, then there must exist a path $P_{x,y}$ in $\mathcal{P}_0(H)$ that passes through $v$ in $H$. Let $P_{x,v}$ and $P_{v,y}$ be subpaths of $P_{x,y}$. By this way, we obtain a new path decomposition $\mathcal{P}(H)$ of $H$ with satisfying (1) and (2), as we promised. It follows further that (3).
\end{proof}

Let $\mathcal{P}(H)$ be a path decomposition of $H$ as given in the previous lemma. It is evident that the $\mathcal{P}(H)$ can be classified into odd-odd, even-even, odd-even (or even-odd) paths respectively.
We denote the number of odd end vertices of odd-odd paths in $\mathcal{P}(H)$ as $n_{o}^{1}(H)$, and the number of odd end vertices of odd-even paths as $n_{o}^{2}(H)$. Clearly, $n_{o}^{1}(H)+n_{o}^{2}(H)=n_o(H)$.

\begin{theorem} For any tree $T$, $p(T)=\frac{n_o(T)}{2}$.
\end{theorem}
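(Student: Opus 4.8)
The plan is to prove the upper bound $p(T) \le \frac{n_o(T)}{2}$ by induction on the number of edges, since the lower bound $p(T) \ge \frac{n_o(T)}{2}$ is already noted in the introduction (each odd vertex must be an end of some path). The base case is trivial (a single edge, or a star, has $n_o$ equal to the number of leaves plus possibly the center). For the inductive step I would like to strip off a carefully chosen subtree or a single ``pendant path'' and apply the inductive hypothesis, then reattach paths in a way that does not increase the count beyond $\frac{n_o(T)}{2}$.

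Concretely, I would root $T$ at a leaf and look at a deepest vertex $v$ that has a child which is a leaf; let $u$ be the parent of $v$. The subtree hanging below $v$ (together with $v$) is a star $S$ with center $v$ and some number $k \ge 1$ of leaves. Delete from $T$ all leaves below $v$ and also, if $v$ now becomes a leaf, possibly the edge $uv$ as well; call the resulting smaller tree $T'$. The main case analysis tracks the parity of $d_T(v)$ and of $d_T(u)$, and how these degrees change in passing to $T'$. In the star $S$ we can bundle the $k$ pendant edges at $v$ into $\lceil k/2 \rceil$ paths (pairing edges through $v$), and one of these paths has $v$ as an endpoint exactly when $k$ is odd, i.e.\ exactly when $v$ is odd in $T$ (here $d_T(v) = k+1$). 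Then I splice: if $v$ is even in $T$, one of the $S$-paths ending at $v$ can be concatenated with a path of the decomposition of $T'$ that ends at $v$, keeping the total at $p(T') + \lceil k/2\rceil - [\,v \text{ even}\,] \le p(T') + \lfloor (k+1)/2 \rfloor$. Comparing $n_o(T)$ with $n_o(T')$ — the $k$ leaves of $S$ are all odd and disappear, $v$ may flip parity, and $u$ may flip parity — should in every case yield $n_o(T) - n_o(T') \ge 2\big(\lceil k/2\rceil - [\,v\text{ even}\,]\big)$, closing the induction.

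Alternatively, and perhaps more cleanly, I would invoke Lemma 2.1 applied to $T$ itself: it gives a decomposition with $\frac{n_o(T)}{2} + n_e(T)$ paths in which every even vertex is the endpoint of exactly two paths. The idea is then to repeatedly merge two paths that share an even endpoint $v$ into one longer path through $v$, each merge reducing the count by one and removing $v$ from the list of ``doubly-used'' even vertices; doing this once per even vertex (exploiting that in a tree the merging cannot create a cycle or a conflict, because there is a unique $v$-$v$ reroute) brings the total down to exactly $\frac{n_o(T)}{2}$. The hard part will be verifying that these merges can be carried out simultaneously/consistently without two of them interfering — i.e.\ that after merging at one even vertex, each remaining even vertex is still the endpoint of exactly two paths of the current decomposition and the two paths there are still distinct. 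Because $T$ is acyclic this should follow from a direct parity/endpoint bookkeeping argument, but making it airtight (e.g.\ by ordering the even vertices suitably, say by a BFS/DFS order, and showing the invariant is preserved) is where the real work lies.
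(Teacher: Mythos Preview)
Both of your approaches are more elaborate than the paper's, which simply does induction on $|V(T)|$ by removing a single leaf $u$. With $T'=T-u$ and $v$ the neighbour of $u$: if $d_{T'}(v)$ is odd then $n_o(T)=n_o(T')$, and the (unique) path of the inductive decomposition of $T'$ ending at $v$ is extended by the edge $uv$; if $d_{T'}(v)$ is even then $n_o(T)=n_o(T')+2$, and $uv$ is added as a new one-edge path. That is the entire argument.

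Your star-removal variant has a parity slip: with $d_T(v)=k+1$, having $k$ odd is equivalent to $v$ being \emph{even} in $T$, not odd (your next sentence in fact uses the correct direction, so this looks like a typo). With that fixed the counting goes through exactly, and the optional deletion of $uv$ and the tracking of $d_T(u)$ are never needed. Your Lemma~2.1 merging approach also works: the obstruction you worry about would be a cycle in the auxiliary multigraph whose edges are the decomposition paths (joining each path's two endpoints); such a cycle would give a closed concatenation of edge-disjoint paths in $T$, hence a subforest of minimum degree at least $2$, which is impossible, so the merges can be performed in any order. Either route is correct but heavier than peeling off one leaf at a time.
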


\begin{proof} Obviously, $p(T)\geq \frac{n_o(T)}{2}$. Next, it remains to show that
$p(T)\leq \frac{n_o(T)}{2}$. The proof is by induction on the order $n$ of $T$. If $n=1$, then $T=P_1$, and thus $E(T)=\emptyset$ and $p(T)=0=\frac{n_o(T)}{2}$.
Now let $n\geq 2$, and the result is true for any tree of order less than $n$. Take a leaf $u$ of $T$, and set $T'=T-u$. By the induction hypothesis, $T'$ has a path decomposition $\mathcal{P}(T')$ into $\frac{n_o(T')}{2}$ paths.

Consider the neighbor $v$ of $u$ in $T$.
If $d_{T'}(v)$ is odd, then $n_o(T_n)=n_o(T')$. Let $P'$ be the path with $v$ as its end in $\mathcal{P}(T')$. Clearly, $P=P'+vu$ and $(\mathcal{P}(T')\setminus \{P'\})\cup \{P\}$ is a path decomposition of $T$ with $\frac{n_o(T)}{2}$.

If $d_{T'}(v)$ is even, then $n_o(T)=n_o(T')+2$. Since $vu$ is a new path, $\mathcal{P}(T')\cup \{vu\}$ is a path decomposition of $T$ with $\frac{n_o(T)}{2}$.
\end{proof}

\section{\large Cartesian product of a path and a graph}

First we prove that Gallai's conjecture holds for Cartesian product of a path and a general graph. Let $V(P_m)=\{u_1,u_2,\cdots,u_m\}$ and $V(H)=\{v_1,v_2,\cdots,v_{n}\}$. For the graph $G=P_{m}\Box H$, let $U_i=\{(u_i,v_j): j\in \{ 1,2,\cdots,n\}\}$ where $i\in\{1,2,\cdots,m\}$. Let $\mathcal{P}(H)$ be a path decomposition of $H$ as described in Lemma 2.1. Let $\mathcal{P}_i=\{P^i: V(P^i)=\{(u_i,v): v\in V(P), P\in \mathcal{P}(H)\}\}$, where $i\in\{1,2,\cdots,m\}$. Then $|\mathcal{P}_i|=p(H)= \frac{n_o(H)}{2}+n_e(H)$, and each odd vertex of $G[U_i]$ is the end vertex of exactly one path of $\mathcal{P}_i$, each even vertex of $G[U_i]$ is the end vertex of exactly two paths of $\mathcal{P}_i$.

\begin{theorem}\label{3.1}
Let $m$ be an integer $m\geq 2$ and $H$ be a connected graph of order $n$. If $G=P_{m}\Box H$, then
$p(G)\leq \frac{n(G)}{2}$.
\end{theorem}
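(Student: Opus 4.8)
The plan is to build a path decomposition of $G = P_m \Box H$ with $n(G)/2 = mn/2$ paths by combining the ``horizontal'' decompositions $\mathcal{P}_i$ of each layer $G[U_i]$ (coming from $\mathcal{P}(H)$ via Lemma 2.1) with the ``vertical'' edges, i.e.\ the edges $(u_i,v_j)(u_{i+1},v_j)$ of the $P_m$-fibers. The total edge count forces us to be economical: $e(G) = m\, e(H) + (m-1)n$, and each horizontal copy $\mathcal{P}_i$ already uses $\frac{n_o(H)}{2} + n_e(H)$ paths, for a total of $m\left(\frac{n_o(H)}{2}+n_e(H)\right)$ horizontal paths. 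Since we are allowed $\frac{mn}{2} = \frac{m(n_o(H)+n_e(H))}{2}$ paths in all, the surplus budget for dealing with all $(m-1)n$ vertical edges is only $\frac{mn}{2} - m\left(\frac{n_o(H)}{2}+n_e(H)\right) = \frac{m\,n_e(H)}{2} - m\,n_e(H)\cdot\frac12\cdot\,$\dots more carefully, it is $\frac{mn_e(H)}{2}$ fewer than $mn_e(H)$, namely a deficit — so in fact we must \emph{merge} vertical edges into existing horizontal paths rather than create many new ones, and also merge some horizontal paths with each other across layers. So the real content is a careful accounting of end vertices.

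The key idea is to treat the vertical edges in each ``column'' $j$ (the path $(u_1,v_j),(u_2,v_j),\dots,(u_m,v_j)$, an induced $P_m$) according to the parity of $d_H(v_j)$. If $v_j$ is odd in $H$, then $v_j$ contributes one horizontal path-end in each layer; here I would extend the vertical column-path at its two ends by splicing it onto the unique horizontal path ending at $(u_1,v_j)$ and the one ending at $(u_m,v_j)$, turning three paths into one and saving two, while the interior vertical vertices $(u_i,v_j)$ ($2\le i\le m-1$) become even in $G$ and must absorb their column edge into a horizontal path passing through them — using one of the two horizontal path-ends guaranteed at each even vertex. If $v_j$ is even in $H$, then $(u_i,v_j)$ has two horizontal path-ends in each layer and the column $P_m$ can be absorbed greedily. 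The bookkeeping should be organized using the quantities $n_o^1(H), n_o^2(H)$ introduced right before Theorem 2.2: odd-odd horizontal paths versus odd-even ones behave differently when we try to splice vertical segments onto their odd ends, because an odd-even path's even end is a vertex that already ``owes'' an end to another path. I expect the cleanest route is to give an explicit algorithmic construction layer by layer (or column by column), maintaining the invariant that after processing, every vertex of $G$ is an end of a controlled number of current paths, and then count.

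The main obstacle will be the parity/counting argument showing the final count is exactly $\le mn/2$ rather than something like $mn/2 + O(n_e(H))$. The naive approach — keep all $m\left(\frac{n_o(H)}{2}+n_e(H)\right)$ horizontal paths and add the vertical edges as new single-edge paths — gives far too many paths, so every saving must be wrung out of merges. Concretely, I would need to show that the number of merges (each saving one path) is at least $m\left(\frac{n_o(H)}{2}+n_e(H)\right) + (m-1)n - \frac{mn}{2}$, and the danger is double-counting a vertex as available for two different merges. I would handle this by a discharging-style argument: assign to each vertex $(u_i,v_j)$ a ``capacity'' equal to $1$ if it is odd in $G$ and $2$ if it is even in $G$ (this is $d_G$-parity, which depends on both $d_H(v_j)$ and whether $i\in\{1,m\}$), note $\sum \text{capacity} = n_o(G) + 2n_e(G) = \sum d_G$-parity stuff, and show the constructed decomposition uses each vertex as a path-end at most its capacity; then $2\cdot(\#\text{paths}) = \sum_v (\text{ends at }v) \le \sum_v \text{capacity}(v) = n_o(G) + 2n_e(G)$, and since $n_o(G)+2n_e(G) \le 2 \cdot \lceil n(G)/2\rceil$ is too weak, I instead use $n_o(G) + 2 n_e(G) = \sum_v (1 \text{ or } 2) $ and pair each even vertex's ``extra'' end with... actually the clean bound is: a decomposition in which each vertex is an end at most twice and each odd vertex at most once has at most $\frac{n_o(G)}{2} + n_e(G) \le \frac{n(G)}{2} + \frac{n_e(G)}{2}$ — still not enough, so additionally I must arrange that in $G = P_m \Box H$ with $m \ge 2$ a positive fraction of even vertices are \emph{not} used as ends at all, using the extra structural freedom from having $m\ge2$ layers to redistribute. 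Pinning down exactly how many even vertices can be left end-free — and proving $n_e(G)$ of them become end-free in pairs, bringing the count down to $\frac{n(G)}{2}$ — is where the argument will be delicate, and likely the reason the theorem needs $m\ge2$ and the hypothesis $p(G)=\frac{n_o(G)}{2}$ in the companion results.
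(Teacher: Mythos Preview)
Your proposal is not yet a proof, and you recognize this yourself: the capacity bound $2\cdot(\#\text{paths})\le n_o(G)+2n_e(G)$ only yields $p(G)\le \frac{n_o(G)}{2}+n_e(G)$, which can exceed $\frac{n(G)}{2}$, and your closing paragraph is an admission that you do not yet know how to force enough even vertices of $G$ to be end-free. The column-by-column merging you describe, together with a discharging argument, is not the mechanism that makes the count come out; there is a missing constructive idea.

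The paper's construction is organized not by columns of $G$ but by the paths of $\mathcal{P}(H)$. For a single path $P_{y_1,y_2}\in\mathcal{P}(H)$, take its $m$ copies $P^1,\dots,P^m$ in the layers and thread them into \emph{one} long zigzag path by inserting, alternately at $y_1$ and $y_2$, the vertical edges $(u_i,y_1)(u_{i+1},y_1)$ and $(u_i,y_2)(u_{i+1},y_2)$; the leftover vertical edges at $y_1,y_2$ become single-edge paths. If the two ends $y_1,y_2$ are both odd in $H$ this yields exactly $m$ paths. If $y_2$ is even, then $y_2$ is also an endpoint of a second path $P_{y_2,y_3}\in\mathcal{P}(H)$, and the zigzag for that path absorbs the vertical edges at $y_2$ that would otherwise have been single-edge paths; continuing along such a chain of paths (odd--even, even--even, \dots, even--odd, or a closed cycle of even--even paths) one finds that each chain of $t$ paths in $\mathcal{P}(H)$ produces $t+(m-1)$ paths in $G$ if the chain has two odd ends, and only $t$ paths if it is an all-even closed chain. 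Summing gives exactly
\[
p(G)\ \le\ |\mathcal{P}(H)|+(m-1)\cdot\frac{n_o(H)}{2}\ =\ \frac{n_o(H)}{2}+n_e(H)+(m-1)\frac{n_o(H)}{2}\ =\ m\,\frac{n_o(H)}{2}+n_e(H)\ \le\ \frac{mn}{2},
\]
the last inequality using only $m\ge 2$. So the saving you were looking for comes not from a parity-counting argument but from the explicit zigzag-and-chain construction, which you did not propose; once you have it, the arithmetic is immediate and no discharging is needed.
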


\begin{proof}

\vspace{4mm}Note that $G[U_i]\cong H$ for each $i\in \{1,2,\cdots,m\}$. For any path $P_{y_1,y_2}\in \mathcal{P}(H)$, we consider the following three cases.

\vspace{2mm}\noindent{\bf Case 1.} $P_{y_1,y_2}$ is an odd-odd path of $H$.

\vspace{2mm}We can find $m$ paths in $G$ as follows (see an example for the case when $m=3$ as shown in Fig. 1):

\vspace{2mm} If $m$ is even, then

$\{\bigcup_{i=1}^{m} P_{y_1,y_2}^i: i \in \{1,2, \cdots, m\}\} \cup \{(u_i,y_1)(u_{i+1},y_1): i \in \{2,4, \cdots, m-2 \} \} \cup \{(u_i,y_2)(u_{i+1},y_2): i \in \{ 1,3, \cdots, m-1\}\}$,

$\{(u_i,y_1)(u_{i+1},y_1)\}$ for each $i \in \{1,3, \cdots, m-1\}$,

$\{(u_i,y_2)(u_{i+1},y_2)\}$ for each $i \in \{2,4, \cdots, m-2\}$.

\vspace{2mm} If $m$ is odd, then

\vspace{2mm}

$\{\bigcup_{i=1}^{m} P_{y_1,y_2}^i: i \in \{1,2, \cdots, m\}\} \cup \{(u_i,y_1)(u_{i+1},y_1): i \in \{2,4, \cdots, m-1 \} \} \cup \{(u_i,y_2)(u_{i+1},y_2): i\in \{1,3, \cdots, m-2\}\}$,

$\{(u_i,y_1)(u_{i+1},y_1)\}$ for each $i \in \{1,3, \cdots, m-2\}$,

$\{(u_i,y_2)(u_{i+1},y_2)\}$ for each $ i \in \{2,4, \cdots, m-1\}$.

\vspace{2mm}\noindent{\bf Case 2.} $P_{y_1,y_2}$ is an odd-even path of $H$.

\vspace{2mm}Then there exists a unique sequence of even-even paths $P_{y_{j},y_{j+1}}\in \mathcal{P}(H)$ for each $j\in \{2,\ldots, t-1\}$ and an even-odd path $P_{y_{t},y_{t+1}}\in \mathcal{P}(H)$. We can find $m+t-1$ paths of $G$ as follows (see an example for the case when $m=3$ as shown in Fig. 2):

\vspace{2mm}\vspace{2mm} If $m$ is even, then

\vspace{2mm}
$\{\bigcup_{i=1}^{m} P_{y_j,y_{j+1}}^i: i \in \{1,2, \cdots, m\}\} \cup \{(u_i,y_j)(u_{i+1},y_j): i\in \{2,4, \cdots, m-2\}\} \cup \{(u_i,y_{j+1})(u_{i+1},y_{j+1}): i\in \{1,3, \cdots, m-1\}\}$ for each $ j\in\{1, \ldots, t\}$,

$\{(u_i,y_1)(u_{i+1},y_1)\}$ for each $ i \in \{1,3, \cdots, m-1\}$,

$\{(u_i,y_{t+1})(u_{i+1},y_{t+1})\}$ for each $ i \in \{2,4, \cdots, m-2\}$.

\vspace{2mm} If $m$ is odd, then

\vspace{2mm}
$\{\bigcup_{i=1}^{m} P_{y_j,y_{j+1}}^i: i \in \{1,2, \cdots, m\}\} \cup \{(u_i,y_j)(u_{i+1},y_j): i\in \{2,4, \cdots, m-1\}\} \cup \{(u_i,y_{j+1})(u_{i+1},y_{j+1}): i\in \{1,3, \cdots, m-2\}\}$ for each $j \in \{1, \ldots, t \}$,

$\{(u_i,y_1)(u_{i+1},y_1)\}$ for each $ i \in \{1,3, \cdots, m-2\}$,

$\{(u_i,y_{t+1})(u_{i+1},y_{t+1})\}$ for each $ i \in \{2,4, \cdots, m-1\}$.

\begin{figure}[h]
\begin{center}
\includegraphics[height=3.5cm]{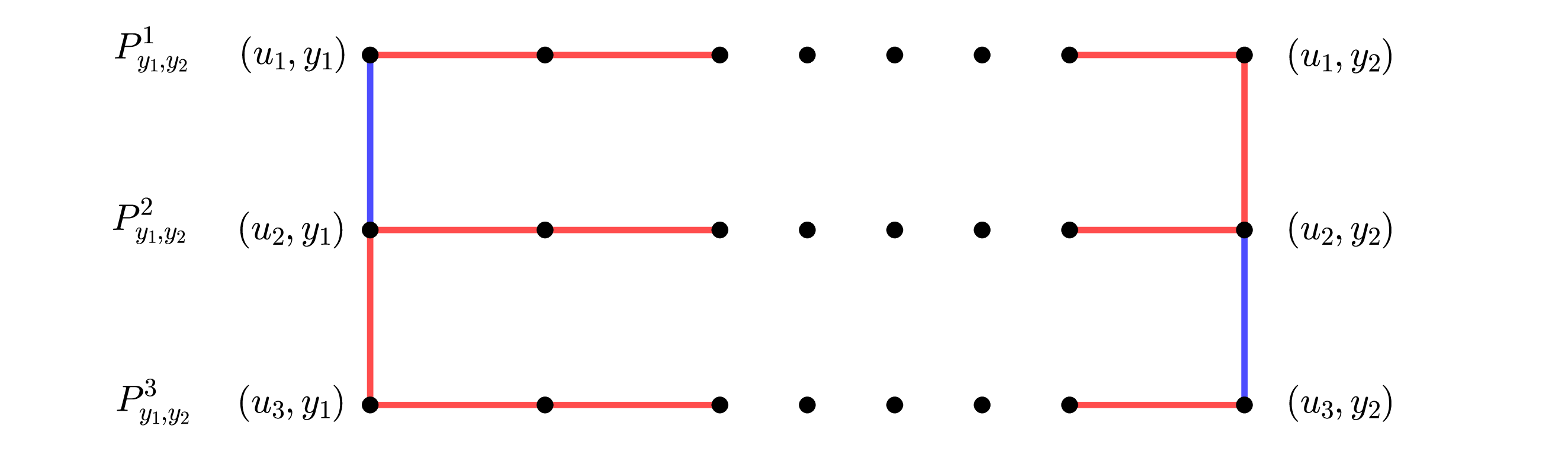}\\
\end{center}
\vspace{2mm}\footnotesize{Fig 1. For $i\in \{1,2,3\}$, each $P_{y_1,y_2}^i$ is an odd-odd path where each vertex has at least one neighbor in $G$.}
\end{figure}

\begin{figure}[h]
\begin{center}
\includegraphics[width=\textwidth]{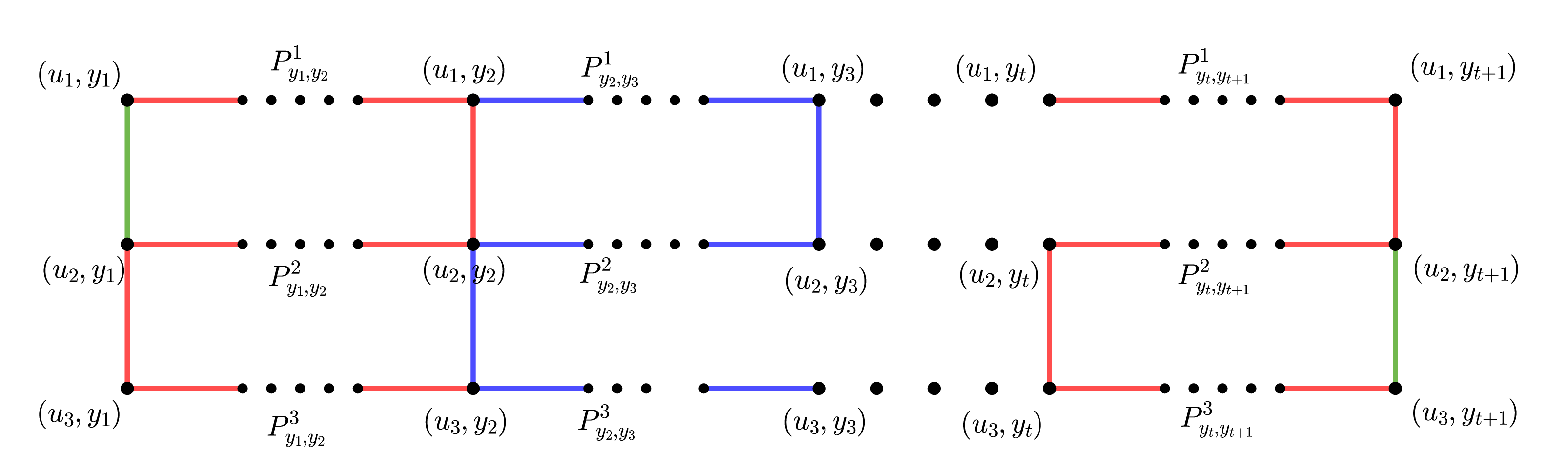}\\
\end{center}
\vspace{2mm}\footnotesize{Fig 2. Each $P_{y_1,y_2}^i$ is an odd-even path, each $P_{y_j,y_{j+1}}^i$ is an even-even path and each $P_{y_t,y_{t+1}}^i$ is an even-odd path, where $i\in \{1,2,3\},j\in \{2,3,\cdots,t-1\}$. Every vertex on the path $P_{y_j,y_{j+1}}^i$ has at least one neighbor in $G$.}
\end{figure}

\begin{figure}[h]
\begin{center}
\includegraphics[width=\textwidth]{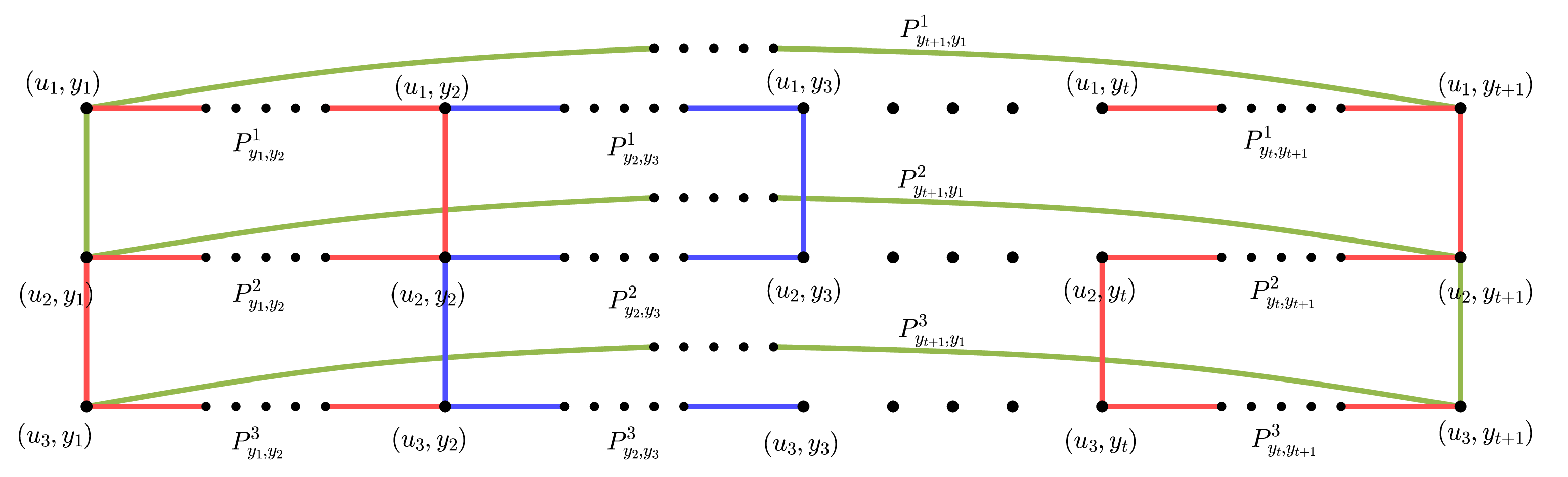}\\
\end{center}
\vspace{2mm}\footnotesize{Fig 3.  Each $P_{y_j,y_{j+1}}^i$ is an even-even path, where $i\in \{1,2,3\}, j\in \{1,2,\cdots,t-1\}$. Every vertex on the path $P_{y_j,y_{j+1}}^i$ has at least two neighbors in $G$.}
\end{figure}

\vspace{3mm}\noindent{\bf Case 3.} $P_{y_1,y_2}$ is an even-even path of $H$.

Then there exists a unique sequence of even-even paths $P_{y_{j}y_{j+1}}\in \mathcal{P}(H)$ for each $j\in \{2,\ldots, t\}$, where $y_{t+1}=y_1$. We can find $t+1$ paths of $G$ as follows (see an example for the case when $m=3$ as shown in Fig. 3):

\vspace{2mm}If $m$ is even, then

\vspace{2mm}$\{\bigcup_{i=1}^{m} P_{y_j,y_{j+1}}^i: i \in \{1,2, \cdots, m\}\} \cup \{(u_i,y_j)(u_{i+1},y_j): i\in \{2,4, \cdots, m-2\}\} \cup \{(u_i,y_{j+1})(u_{i+1},y_{j+1}): i\in \{1,3, \cdots, m-1\}\}$ for each $j\in\{1, \ldots, t\}$.

\vspace{2mm}If $m$ is odd, then

\vspace{2mm}$\{\bigcup_{i=1}^{m} P_{y_j,y_{j+1}}^i: i \in \{1,2, \cdots, m\}\} \cup \{(u_i,y_j)(u_{i+1},y_j): i\in \{2,4, \cdots, m-1 \}\} \cup \{(u_i,y_{j+1})(u_{i+1},y_{j+1}): i\in \{1,3, \cdots, m-2\}\}$ for each $j\in\{1, \ldots, t\}$.

\vspace{2mm}Combining the above, we conclude that there is a path decomposition $\mathcal{P}(G)$ of graph $G$ with $p(G)\leq p(H)+(m-1)\frac{n_{o}^{1}(H)}{2}+(m-1)\frac{n_{o}^{2}(H)}{2}= m\frac{n_o(H)}{2}+n_e(H)\leq \frac{mn}{2}$.

%Particularly, if $H$ is an odd graph, then $n_{o}^{1}(H)=n$ and $n_{o}^{2}(H)=n_e(H)=0$. Hence, $p(P_m\Box H)=p(H)+(m-1)\frac{n}{2}=\frac{mn}{2}$.

The proof is now finished.

\end{proof}

\section{\large Cartesian product of two general graphs}

In this section, we discuss the path decomposition of the Cartesian product of two general graphs.
To simplify our statement, we give the following general definitions.

\vspace{1.5mm} Let $P_m$ be a path with $V(P_m)=\{u_1,u_2,\cdots,u_m\}$. Some internal vertices of $P_m$ is called {\it virtual}, and the remaining ones are called {\it real}. Note that the ends of the path must be real. Such a path is called a {\it virtual-real} path. The Cartesian product of a virtual-real path $P_m$ and a graph $H$ is defined as the graph with $V(P_m\boxdot H)=V(P_m)\times V(H)$, in which for any $v\in V(H)$, $\{(u_i, v):\ 1\leq i\leq m\}$ induces $P_m$, for each $i\in\{1, \ldots, m\}$, $\{u_i\}\times V(H)$ induces a graph isomorphic to $H$ if $u_i$ is real, and otherwise, $\{u_i\}\times V(H)$ is an independent set.

\begin{lemma}
If $G'$ be a graph obtained from a graph $G$ by subdividing an edge, then $p(G')=p(G)$.
\end{lemma}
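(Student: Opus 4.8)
The plan is to establish the two inequalities $p(G')\le p(G)$ and $p(G)\le p(G')$ separately. Write $e=uv$ for the subdivided edge and $x$ for the new vertex, so that $d_{G'}(x)=2$ with $N_{G'}(x)=\{u,v\}$ and $E(G')=(E(G)\setminus\{uv\})\cup\{ux,xv\}$.

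For $p(G')\le p(G)$ I would start from an optimal path decomposition $\mathcal{P}$ of $G$. Exactly one path $P\in\mathcal{P}$ contains the edge $uv$, and inside $P$ this edge appears as a consecutive pair $\dots,u,v,\dots$ (the degenerate case being $P=uv$ itself). Replacing that occurrence of $uv$ by the two-edge segment $u,x,v$ keeps $P$ a path, since $x$ is a brand new vertex, and produces a path decomposition of $G'$ of the same size; hence $p(G')\le|\mathcal{P}|=p(G)$.

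For $p(G)\le p(G')$ I would start from an optimal path decomposition $\mathcal{P}'$ of $G'$ and examine how the two edges at $x$ are used. Since $d_{G'}(x)=2$, either (a) $ux$ and $xv$ lie in the same path $P$, forcing $x$ to be an internal vertex of $P$, so $P=\dots,u,x,v,\dots$; then replacing the segment $u,x,v$ by the single edge $uv$ turns $P$ into a path of $G$ and gives a path decomposition of $G$ of size $|\mathcal{P}'|$; or (b) $ux\in P_1$ and $xv\in P_2$ with $P_1\ne P_2$, in which case $x$ is necessarily an end vertex of each of $P_1,P_2$. In case (b) the natural move is to delete $x$ from both paths, obtaining a path $Q_1$ with $u$ as an end and a path $Q_2$ with $v$ as an end, and then to cover $E(Q_1)\cup E(Q_2)\cup\{uv\}$ by at most two paths of $G$ while leaving every other member of $\mathcal{P}'$ untouched. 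When $v\notin V(Q_1)$ (or symmetrically $u\notin V(Q_2)$) this is immediate: just append the edge $uv$ to the $u$-end of $Q_1$, and keep $Q_2$ as is.

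The hard part will be the remaining configuration of case (b), where $v\in V(Q_1)$ and $u\in V(Q_2)$ simultaneously: now appending $uv$ to either path closes a cycle, so one genuinely has to reroute — splice $Q_1$ and $Q_2$ together at a carefully chosen common vertex and distribute the edge $uv$ so that the two resulting walks are honest paths with no repeated vertex. Carrying out this splicing in full generality, tracking how the two paths interleave, is the delicate core of the argument and where I expect most of the work to go; I would also keep in mind that optimality of $\mathcal{P}'$ may have to be invoked here (for instance, the observation that $P_1$ and $P_2$ cannot already be concatenated into a single path of $G'$), since a purely local reconfiguration seems to lack that extra leverage.
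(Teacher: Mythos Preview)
The paper states this lemma with no proof whatsoever, so there is nothing to compare your outline against; I will simply assess it on its own merits.

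Your argument for $p(G')\le p(G)$ is complete and correct, and so are case~(a) and the easy subcase of~(b) in the converse direction. The genuine gap is exactly where you locate it: the subcase of~(b) in which $v\in V(Q_1)$ and $u\in V(Q_2)$ simultaneously. You are right that this configuration is not excluded by optimality. For a concrete instance, let $G'$ have vertex set $\{a,b,c,d,u,v,x\}$ with the optimal two–path decomposition $P_1=a\,v\,b\,u\,x$ and $P_2=c\,u\,d\,v\,x$; then $Q_1=a\,v\,b\,u$ contains $v$ and $Q_2=c\,u\,d\,v$ contains $u$, and neither $Q_1+uv$ nor $Q_2+vu$ is a path. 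A two–path decomposition of the corresponding $G$ does exist (for example $a\,v\,u\,b$ together with $c\,u\,d\,v\,b$), but obtaining it forces you to split $Q_1$ at an interior vertex and redistribute pieces between the two paths, exactly the ``rerouting'' you anticipate. You have not carried this out, and invoking optimality alone (e.g.\ that $P_1$ and $P_2$ cannot be concatenated into a single path of $G'$) does not finish the job: in the example $P_1$ and $P_2$ share the vertices $u,v,x$, so that observation is already in force and yet the local obstruction persists. As written, the inequality $p(G)\le p(G')$ is therefore not established.

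Two remarks may help. First, the paper's actual uses of this lemma (in the proofs of Theorem~4.2 and Theorem~5.2) require only the inequality $p(Q_i\boxdot H)\le p(P_{l_i-s_i}\Box H)$, and since $Q_i\boxdot H$ is a repeated subdivision of $P_{l_i-s_i}\Box H$ this is precisely your easy direction $p(G')\le p(G)$; so for the purposes of the paper nothing is missing. Second, if you want the full equality, a cleaner target than ``build a decomposition of $G$ directly'' is to show that among all optimal decompositions of $G'$ there is one in which $ux$ and $xv$ lie in the same path—equivalently, that a degree-two vertex with non-adjacent neighbours need never be a path-end in an optimal decomposition. Proving that still requires a swap argument at a common vertex of $P_1$ and $P_2$, but it isolates the combinatorial content more sharply.
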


Now we are ready to prove one of our main results.

\begin{theorem} Let $G$ be a connected graph of order $m\geq 2$ and $H$ be a connected graph of order $n$. If $p(G)=\frac{n_o(G)}{2}$, then $p(G\Box H)\leq \frac{mn}{2}$.
\end{theorem}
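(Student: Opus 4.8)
The plan is to mimic the structure of the proof of Theorem~\ref{3.1}, but with $P_m$ replaced by the arbitrary connected graph $G$ of order $m$. Fix a path decomposition $\mathcal{Q}$ of $G$ with $|\mathcal{Q}| = p(G) = \frac{n_o(G)}{2}$; note that in such an optimal decomposition every odd vertex of $G$ is an end of exactly one path of $\mathcal{Q}$ and every even vertex is an end of no path of $\mathcal{Q}$ (since $\sum$ of path-ends must cover all odd vertices and there are only $\frac{n_o(G)}{2}$ paths, so no slack is available). Also fix, via Lemma~2.1, a path decomposition $\mathcal{P}(H)$ of $H$ in which each odd vertex of $H$ is an end of exactly one path, each even vertex an end of exactly two, and $|\mathcal{P}(H)| = \frac{n_o(H)}{2} + n_e(H)$; classify its paths into odd-odd, odd-even, and even-even types with the counts $n_o^1(H), n_o^2(H)$ as in the excerpt.

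The idea is to build $\mathcal{P}(G\Box H)$ in two layers. For each path $Q = w_1 w_2 \cdots w_k \in \mathcal{Q}$ (a subgraph of $G$, with $k-1$ edges), the edges of $G\Box H$ lying ``over'' $Q$ — that is, edges of the form $(w_\ell, v)(w_{\ell+1}, v)$ for $v\in V(H)$ — together with the copies of $\mathcal{P}(H)$ sitting on the fibres $\{w_\ell\}\times V(H)$, form a subgraph isomorphic to a Cartesian-product-like object over the path $P_k$. More precisely, I want to handle each $Q\in\mathcal{Q}$ essentially by the construction of Theorem~\ref{3.1} applied with $P_k$ in place of $P_m$, but being careful that the $H$-fibre decomposition $\mathcal{P}_i$ over an \emph{internal} vertex $w_\ell$ of $Q$ must \emph{not} be spent again when $w_\ell$ is also internal (or an end) of another path of $\mathcal{Q}$. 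This is exactly what the virtual-real machinery ($P_m\boxdot H$) and Lemma~4.1 (subdivision invariance of $p$) are set up to manage: a vertex $w_\ell$ of $G$ that is an internal vertex of some $Q$ gets treated as ``virtual'' for that $Q$ (its $H$-fibre edges are assigned elsewhere), and ``real'' for exactly one path of $\mathcal{Q}$ — or, if it is an end of a path, real there. Since $\mathcal{Q}$ is a decomposition, the fibre $\{w\}\times V(H)$ (which carries a copy of $E(H)$) should be assigned to the unique path of $\mathcal{Q}$ for which $w$ is ``responsible'', and the bookkeeping must ensure each vertex $w$ is responsible for exactly one such path; this is possible because $w$ appears as an endpoint across $\mathcal{Q}$ a number of times equal to its degree-parity contribution, and one can orient/assign consistently.

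Concretely, the key steps in order: (1) record the structural fact that $p(G)=\frac{n_o(G)}{2}$ forces the ``no-slack'' end-vertex pattern on $\mathcal{Q}$; (2) for each $Q\in\mathcal{Q}$ of order $k$, invoke (a virtual-real refinement of) the construction in Theorem~\ref{3.1}, which over the path $Q$ produces at most $k\cdot\frac{n_o(H)}{2} + n_e(H)$ paths when all internal vertices are real, but fewer — roughly $(k - (\#\text{virtual internal vertices}))\cdot\frac{n_o(H)}{2} + (\text{correction})$ — when some are virtual, using Lemma~4.1 to see that suppressing a virtual fibre does not increase the path count; (3) assign each fibre copy of $E(H)$ to exactly one $Q$, so globally each of the $m$ fibres contributes its $\frac{n_o(H)}{2}+n_e(H)$ ``$H$-paths'' once; (4) sum the counts over $\mathcal{Q}$: the ``vertical'' edges contribute, per path $Q$ and per odd-odd/odd-even path of $H$, the same $(k-1)\frac{n_o^1(H)+n_o^2(H)}{2}$ bookkeeping as before, and telescoping over $\mathcal{Q}$ (using $\sum_{Q}(k_Q - 1) = e(G)$ and $|\mathcal{Q}| = \frac{n_o(G)}{2}$) yields
$$p(G\Box H) \le m\cdot\frac{n_o(H)}{2} + n_e(H) + \big(\text{terms summing to } n\cdot\tfrac{n_o(G)}{2}\big) \le \frac{mn}{2},$$
where the last inequality is the arithmetic identity $m n = n_o(H)\cdot m + 2 n_e(H)\cdot(\cdots)$ combined with $n = n_o(H) + n_e(H)$ and $m = n_o(G) + n_e(G)$, handled exactly as in Theorem~\ref{3.1}.

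The main obstacle I expect is step (3)/(2): making the assignment of fibres to paths of $\mathcal{Q}$ globally consistent \emph{and} compatible with the virtual-real constructions, so that every edge of $G\Box H$ is covered exactly once and the per-path counts genuinely add up to at most $\frac{mn}{2}$ rather than overcounting the even-vertex fibres. In Theorem~\ref{3.1} this was transparent because $P_m$ has a canonical linear structure and only two end-fibres; for a general tree-like or cyclic $\mathcal{Q}$ one must verify that the ``extra'' single-edge paths $\{(u_i,y)(u_{i+1},y)\}$ introduced along each $Q$ do not collide between different paths of $\mathcal{Q}$ sharing a vertex, and that the even-even paths of $H$ (Case~3, which needs fibres with two neighbours) can always be routed — this likely requires that $G$ itself, being connected of order $\ge 2$, supplies enough adjacent fibres, which is where connectivity of $G$ is used. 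I would isolate this as the crux lemma and prove it by a careful but routine case analysis on the type of the $H$-path and the position (real/virtual, end/internal) of the relevant $G$-vertices.
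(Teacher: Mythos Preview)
Your approach is essentially the paper's: take an optimal decomposition $\mathcal{Q}=\{Q_1,\ldots,Q_{p(G)}\}$ of $G$, assign each vertex of $G$ to be \emph{real} on exactly one $Q_i$ containing it (and virtual on the others), form the edge-disjoint subgraphs $Q_i\boxdot H$ of $G\Box H$, apply Lemma~4.1 to replace each $Q_i\boxdot H$ by $P_{l_i-s_i}\Box H$, and then invoke Theorem~\ref{3.1}. Your structural observation in step~(1) is correct, and it guarantees that both ends of every $Q_i$ are real, so $l_i-s_i\ge 2$ and Theorem~\ref{3.1} applies.

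Where you go astray is in steps~(2)--(4) and in the ``main obstacle''. You are re-opening the case analysis of Theorem~\ref{3.1} and trying to track $n_o^1(H)$, $n_o^2(H)$, $n_e(H)$ through the sum; this is unnecessary, and your displayed inequality is not right (the lone $n_e(H)$ term and the extra ``$n\cdot\frac{n_o(G)}{2}$'' terms do not arise). The paper simply uses Theorem~\ref{3.1} as a black box: $p(P_k\Box H)\le \frac{kn}{2}$. Since each vertex of $G$ is real exactly once, $\sum_i (l_i-s_i)=m$, and hence
\[
p(G\Box H)\;\le\;\sum_{i=1}^{p(G)} p(Q_i\boxdot H)\;=\;\sum_{i=1}^{p(G)} p(P_{l_i-s_i}\Box H)\;\le\;\sum_{i=1}^{p(G)}\frac{(l_i-s_i)\,n}{2}\;=\;\frac{mn}{2}.
\]
That is the entire count. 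Moreover, the obstacle you flag---collisions between single-edge paths coming from different $Q_i$, or routing of even--even $H$-paths across $Q$-boundaries---does not exist: the subgraphs $Q_i\boxdot H$ are pairwise edge-disjoint and their union is exactly $E(G\Box H)$ (the $Q_i$ partition $E(G)$, hence the $G$-edges of $G\Box H$; the real/virtual assignment partitions the $H$-fibres). So any path decompositions of the $Q_i\boxdot H$ concatenate without interference, and no further case analysis is needed.
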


\begin{proof} Let $\mathcal{P}(G)$ be a path decomposition of $G$ with $|\mathcal{P}(G)|=\frac{n_o(G)}{2}$. Let $v$ be any vertex in $G$. It is assigned to be real on the path in $\mathcal{P}(G)$ with $v$ as an end vertex if $v$ is odd, and is assigned to virtual for the remaining paths containing it; If $v$ is even, it is assigned to be real on unique a (randomly chosen) path in $\mathcal{P}(G)$ containing $v$, and is assigned to virtual for the remaining paths containing it. Let $\mathcal{P}(G)=\{Q_i:\ 1\leq i\leq p(G)\}$. For convenience, let $l_i$ be the order of $Q_i$ and $s_i$ be the number of virtual vertex in $Q_i$ for each $i\in\{1, \ldots, p(G)\}$. Based on the above facts, it follows that $\sum_{i=1}^{p(G)}(l_i-s_i)=m$. It is obvious that $E(G)=\cup_{i=1}^{p(G)} E(Q_i\boxdot H)$. By Lemma 4.1, $p(Q_i\boxdot H)= p(P_{l_i-s_i}\Box H)$.  Moreover, by Theorem 3.1,

\begin{eqnarray*}
p(G\Box H)&\leq& \sum_{i=1}^{p(G)} p(Q_i\boxdot H))\\
&=& \sum_{i=1}^{p(G)} p(P_{l_i-s_i}\Box H) \\
&\leq&\sum_{i=1}^{p(G)}\frac{(l_i-s_i)n}{2}\\
&=& \frac{n}{2}\sum_{i=1}^{p(G)}(l_i-s_i)=\frac{mn}{2}.
\end{eqnarray*}
\end{proof}

In view of the above theorem, it become an interesting question: which graph $G$ satisfies $p(G)=\frac{n_o(G)}{2}?$
Clearly, if $G$ is an odd graph, then $p(G)=\frac{n_o(G)}{2}$. We can immediately obtain the following corollaries by Corollary 1.3, Theorem 2.2 and Theorem 4.2.

\begin{corollary} Let $G$ be a connected graph of order $m\geq 2$ and $H$ be a connected graph of order $n$. If $G$ is an odd graph, then $p(G\Box H)\leq\frac{mn}{2}$.
\end{corollary}

\begin{corollary} Let $T$ be a tree of order $m\geq 2$ and $H$ be a connected graph of order $n$. If $G=T\Box H$, then $p(G)\leq \frac{mn}{2}$.
\end{corollary}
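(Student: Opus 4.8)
The plan is to obtain this corollary by simply chaining the two results already established, with no new construction required; the point is that a tree is exactly the type of first factor for which Theorem~4.2 was built.

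First I would invoke Theorem~2.2: since $T$ is a tree (of order $m\geq 2$), it satisfies $p(T)=\frac{n_o(T)}{2}$. This is precisely the hypothesis $p(G)=\frac{n_o(G)}{2}$ demanded by Theorem~4.2. Next I would apply Theorem~4.2 with $G:=T$; as $T$ is connected of order $m\geq 2$ and $H$ is connected of order $n$, the hypotheses match verbatim, and the conclusion $p(T\Box H)\leq\frac{mn}{2}$ is exactly the assertion. Since $n(T\Box H)=mn$ and $\frac{mn}{2}\leq\lceil\frac{mn}{2}\rceil$, this in particular confirms Gallai's conjecture for $G=T\Box H$.

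There is essentially no obstacle in the corollary itself: all the substantive work lies upstream --- the explicit path packings on $P_m\Box H$, organized according to whether a fibre path in $\mathcal{P}(H)$ is odd-odd, odd-even, or even-even (Theorem~3.1); the transfer from $P_m\Box H$ to $G\Box H$ via virtual-real paths together with the subdivision invariance of $p$ (Lemma~4.1 and Theorem~4.2); and the leaf-deletion induction giving $p(T)=\frac{n_o(T)}{2}$ (Theorem~2.2). The one point I would be careful to note is the trivial compatibility check, namely that a tree of order $m\geq 2$ is indeed connected with at least two vertices, so that Theorem~4.2 genuinely applies.
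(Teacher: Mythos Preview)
Your proposal is correct and matches the paper's own approach: the corollary is stated as an immediate consequence of Theorem~2.2 (giving $p(T)=\frac{n_o(T)}{2}$) together with Theorem~4.2, exactly as you outline. No additional argument is needed.
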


\section{\large More graphs $G$ with $p(G)= \frac{n_o(G)}{2}$}

\begin{lemma} For two positive integers $n$ and $t$ with $\max\{n, t\}\geq 4$, $p(P_n\Box P_t)=\frac{n_o(P_n\Box P_t)}{2}$.
\end{lemma}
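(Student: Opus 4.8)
The plan is to first reduce to the grid $P_n \Box P_t$ and compute $n_o(P_n\Box P_t)$ explicitly. A vertex $(u_i,v_j)$ of $P_n\Box P_t$ has degree $d_{P_n}(u_i)+d_{P_t}(v_j)$, which is odd precisely when exactly one of $u_i,v_j$ is an endpoint of its path. Since a path has exactly two endpoints (each of degree $1$) and all internal vertices have degree $2$, counting gives $n_o(P_n\Box P_t)=2(n-2)+2(t-2)=2(n+t-4)$ when both $n,t\geq 2$; the corner vertices have degree $2$ (even), the four "edge" strips contribute the odd vertices. Thus the target is to produce a path decomposition of $P_n\Box P_t$ with exactly $n+t-4$ paths. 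Note $p(P_n\Box P_t)\geq n_o/2 = n+t-4$ is automatic, so only the upper bound needs work.

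For the upper bound, since $\max\{n,t\}\geq 4$, say $n\geq 4$, I would view $P_n\Box P_t$ as $P_t\Box P_n$ and try to apply Theorem 3.1's construction but tracked more carefully: Theorem 3.1 gives $p(P_m\Box H)\leq m\frac{n_o(H)}{2}+n_e(H)$, and here with $H=P_n$ (so $n_o(H)=2$, $n_e(H)=n-2$) and $m=t$ this yields $p(P_t\Box P_n)\leq t + (n-2) = n+t-2$, which is $2$ too many. So the real content is squeezing out those two extra paths. The idea is that in the construction of Theorem 3.1, the "leftover" single edges come from the odd end vertices of each copy; when $H=P_n$ with $n\geq 4$ is itself a path, one can take $\mathcal{P}(P_n)$ to be the single path $P_n$ together with pendant-edge artifacts, but more usefully one should redo the decomposition of the grid directly: decompose $P_n\Box P_t$ into $t-2$ "horizontal" paths (the copies $P_n^2,\dots,P_n^{t-1}$ through the internal rows, extended with connecting rungs) plus $n-2$ "vertical" paths for the columns, plus handle the two boundary rows $P_n^1$ and $P_n^t$ by absorbing their edges into the vertical paths at the ends. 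Concretely, I would build $n-2$ long paths each snaking through one internal column together with pieces of the top and bottom rows, and $t-2$ paths covering the internal rows and the vertical rungs, arranged so every odd-degree vertex is used exactly once as an endpoint; a small explicit base-case check for $P_4\Box P_2$, $P_4\Box P_3$, $P_4\Box P_4$ anchors an induction on $\max\{n,t\}$ (deleting a boundary row or column reduces $n$ or $t$ by one, changing $n_o$ by $\pm 2$, mirroring the tree induction in Theorem 2.2).

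The main obstacle I anticipate is the bookkeeping at the four corners and the two "short" boundary rows/columns: the corners have degree $2$ and must be interior points of paths (never endpoints), while the boundary strips contribute all the odd vertices, so the decomposition must thread each boundary-strip vertex into exactly one path-end without wasting a path on leftover single edges — this is exactly where Theorem 3.1's generic construction is wasteful by $2$. I would handle it by an explicit local pattern near each corner (combining the last rung with the adjacent row edge and a column edge into one path) rather than a black-box appeal, and then verify the global count $n+t-4$. Once the grid case is settled, the stated lemma follows immediately since it is precisely $p(P_n\Box P_t)=\frac{n_o(P_n\Box P_t)}{2}$, and the lower bound is the trivial $p(G)\geq n_o(G)/2$.
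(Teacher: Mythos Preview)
Your setup is correct and matches the paper: you compute $n_o(P_n\Box P_t)=2(n+t-4)$, note the lower bound $p\geq n+t-4$ is automatic, and correctly observe that Theorem~3.1 overshoots by exactly $2$. But the entire content of this lemma is the upper-bound construction, and that is where your proposal stays at the level of intention (``I would build\ldots'', ``I anticipate\ldots'') without committing to a decomposition. The paper does not use induction at all: it writes the paths down explicitly, splitting on the parity of $n$. Concretely, it takes one long path running around three sides of the boundary, then $t-2$ full horizontal rows $j=2,\ldots,t-1$, then pairs of adjacent interior columns joined at the top into ``U''-shaped paths, with the leftover bottom-row edges between those pairs as single-edge paths; a one-column adjustment handles odd $n$. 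The count is then read off directly as $n+t-4$.

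Your alternative induction on $t$ (delete the last row) is not wrong in spirit, but it is not free either: to extend a minimum decomposition of $P_n\Box P_{t-1}$ you need to know \emph{which} vertices of row $t-1$ are path-endpoints, because after adding row $t$ those $n-2$ former odd vertices become even and must absorb the new rungs, while the two corners $(u_1,v_{t-1}),(u_n,v_{t-1})$ flip from even to odd. So the induction hypothesis must carry a structural invariant (every odd boundary vertex of the current bottom row is an endpoint of exactly one path, and the two corners are interior to some path) that you have not stated or verified is preserved. Without that, the step ``deleting a boundary row reduces $n_o$ by $2$'' is only a parity observation, not a construction. Either write the explicit decomposition as the paper does, or formulate and prove the stronger inductive invariant; as it stands, the crucial step is missing.
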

\begin{proof} Without loss of generality, $n\geq 4$. The result holds trivially if $t=1$. So, let $t\geq 2$.
Let $G=P_n\Box P_t$, $V(P_n)=\{u_1,u_2,\cdots,u_{n}\}$ and $V(P_t)=\{v_1,v_2,\cdots,v_t\}$.
As we have seen before,
\begin{equation} p(G)\geq max\{\lceil\frac{n_o(G)}{2}\rceil, \lceil\frac{\Delta(G)}{2}\rceil\}.
\end{equation}

\vspace{2mm} It is well-known that $d_G(u_i,v_j)=d_{P_n}(u_i)+d_{P_t}(v_j)\leq 2+(m-1)$, we have $\Delta(G)= 4$ and $n_o(G)=2(n+t-4).$ Combining with the above two facts, it follows that $\Delta(G)\leq n_o(G)$ for $n+t\geq 6$. Therefore, by (1), we have $p(G)\geq n+t-4$.

\vspace{2mm} It remains to show that $p(G)\leq n+t-4$.
If $n$ is even, then $G$ has a path decomposition as follows (see an example for the case when $n=6$ as shown in Fig. 4(a)):

\vspace{1.5mm}\noindent$\bullet$ $P_{(u_2,v_1),(u_{n-1},v_{1})}$: $(u_2,v_1)(u_1,v_1)(u_1,v_2)\cdots(u_1,v_t)(u_2,v_t)\cdots(u_n,v_t)
(u_{n},v_{t-1})\cdots(u_n,v_1)$ $(u_{n-1},v_{1})$.

\vspace{1.5mm}\noindent$\bullet$ $P_{(u_i,v_t),(u_{i+1},v_{t})}$: $(u_i,v_t)(u_{i},v_{t-1})\cdots(u_i,v_1)(u_{i+1},v_1)\cdots(u_{i+1},v_{t-1})
(u_{i+1},v_{t})$ for each $i \in \{2,4,\cdots, n-2\}$.

\vspace{1.5mm}\noindent$\bullet$ $P_{(u_i,v_1),(u_{i+1},v_{1})}$: $(u_i,v_1)(u_{i+1},v_{1})$ for each $i \in \{3,5,\cdots,n-3\}$.

\vspace{1.5mm}\noindent$\bullet$ $P_{(u_1,v_j),(u_n,v_j)}$: $(u_1,v_j)(u_{2},v_j)\cdots(u_n,v_j)$ for each $j \in \{2,3,\cdots,t-1\}$.

\vspace{2mm} Summing up the above, $p(P_n\Box P_t)\leq 1+\frac{n-2}{2}+\frac{n-4}{2}+(t-2)=n+t-4$.

\begin{figure}[h]
\begin{center}
\includegraphics[width=\textwidth]{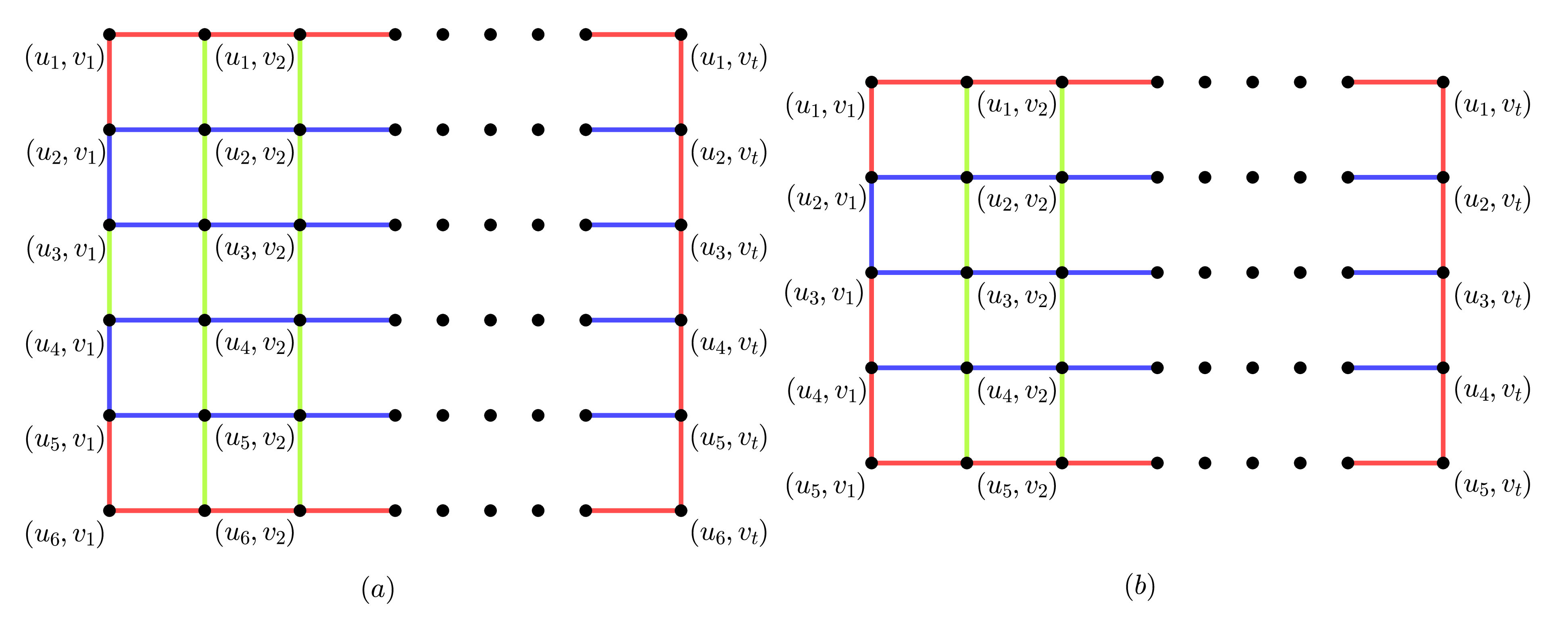}
\end{center}
\vspace{1mm}\footnotesize{Fig 4. (a) is an example of path decomposition of $P_n\Box P_{t}$ when $n=6$ and (b) is an example of path decomposition of $P_n\Box P_{t}$ when $n=5$.}
\end{figure}

\vspace{2mm} If $n$ is odd, then $G$ has a path decomposition as follows (see an example for the case when $n=5$ as shown in Fig. 4(b)):

\vspace{1.5mm}\noindent$\bullet$ $P_{(u_2,v_1),(u_{n-2},v_1)}$: $(u_2,v_1)(u_1,v_1)(u_1,v_2)\cdots(u_1,v_t)(u_2,v_t)\cdots(u_n,v_t)
(u_n,v_{t-1})\cdots(u_n,v_1)$ $(u_{n-1},v_1)(u_{n-2},v_1)$.

\vspace{1.5mm}\noindent$\bullet$ $P_{(u_i,v_t),(u_{i+1},v_{t})}$: $(u_i,v_t)(u_i,v_{t-1})\cdots(u_i,v_1)(u_{i+1},v_1)\cdots(u_{i+1},v_{t-1})
(u_{i+1},v_{t})$ for each $ i \in \{2,4,\cdots,n-3\}$.

\vspace{1.5mm}\noindent$\bullet$ $P_{(u_i,v_1),(u_{i+1},v_{1})}$: $(u_i,v_1)(u_{i+1},v_{1})$ for each $i \in \{3,5,\cdots,n-4\}$.

\vspace{1.5mm}\noindent$\bullet$ $P_{(u_{n-1},v_1),(u_{n-1},v_{t})}$: $(u_{n-1},v_1)(u_{n-1},v_{2})\cdots(u_{n-1},v_{t})$.

\vspace{1.5mm}\noindent$\bullet$ $P_{(u_1,v_j),(u_n,v_j)}$: $(u_1,v_j)(u_{2},v_j)\cdots(u_n,v_j)$ for each  $j \in \{2,3,\cdots,t-1\}$.

\vspace{2mm}Hence, $p(P_n\Box P_t)\leq 1+\frac{n-3}{2}+\frac{n-5}{2}+1+(t-2)=n+t-4$.

The proof is now finished.

\end{proof}

\begin{theorem}
Let $T$ be a tree. For any positive integer $n\geq 4$, $p(P_n\Box T)=\frac{n_o(P_n\Box T)}{2}$.
\end{theorem}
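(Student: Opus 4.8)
The plan is to prove the lower bound $p(P_n\Box T)\geq \frac{n_o(P_n\Box T)}{2}$ trivially (as always) and concentrate on the upper bound $p(P_n\Box T)\leq \frac{n_o(P_n\Box T)}{2}$. The natural strategy is induction on the order of the tree $T$, peeling off a leaf, exactly as in the proof of Theorem 2.2, but now keeping track of how adding a leaf to $T$ changes $P_n\Box T$: it attaches a fresh copy of $P_n$ to the product, joined to an existing copy $\{v\}\times V(P_n)$ by a perfect matching of $n$ new edges. So the induction step must show that given a path decomposition of $P_n\Box T'$ of the right size, together with these $2n-1$ new edges (the $n$ rungs plus the $n-1$ edges of the new $P_n$-fiber), one can build a path decomposition of $P_n\Box T$ whose size exceeds that of $T'$ by exactly the predicted increment.

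The key computation to get right is the increment. Writing $G=P_n\Box T$ and $G'=P_n\Box T'$ where $T'=T-u$ and $v$ is the neighbor of $u$, one compares $n_o(G)$ with $n_o(G')$: the parity of $d_G(u_i,v)$ differs from $d_{G'}(u_i,v)$ for every $i$, and the new fiber over $u$ has its own odd/even pattern depending only on $n$. One finds that $\tfrac{n_o(G)}{2}-\tfrac{n_o(G')}{2}$ equals a fixed quantity depending on $n$ and on the parity of $d_{T'}(v)$. The goal is then to exhibit a decomposition of the graph on $2n$ vertices spanned by the two fibers over $v$ and $u$ plus the connecting matching — essentially a $P_2\Box P_n$ with some fiber edges already spoken for by the old decomposition — using exactly that many \emph{new or re-routed} paths, in the spirit of the three-case fiber-by-fiber construction of Theorem 3.1. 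In fact the cleanest route is probably to invoke Theorem 4.2 together with Theorem 2.2: since $p(T)=\frac{n_o(T)}{2}$ by Theorem 2.2, Theorem 4.2 already gives $p(P_n\Box T)=p(T\Box P_n)\leq \frac{mn}{2}$ where $m=n(T)$; the content that remains is to show this bound coincides with $\frac{n_o(P_n\Box T)}{2}$, i.e. that equality holds, which is exactly where the hypothesis $n\geq 4$ and Lemma 5.1 (the $P_n\Box P_t$ base case) enter.

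Concretely, I would proceed as follows. First, compute $n_o(P_n\Box T)$ in closed form: a vertex $(u_i,w)$ has degree $d_{P_n}(u_i)+d_T(w)$, which is odd iff $d_{P_n}(u_i)$ and $d_T(w)$ have opposite parity; splitting by whether $u_i$ is an endpoint of $P_n$ ($i\in\{1,n\}$, degree $1$) or internal (degree $2$), one gets $n_o(P_n\Box T)=2n_e(T)+(n-2)n_o(T)$ — this must then be checked to equal $mn$ minus twice the number of paths "lost" in the Theorem 4.2 construction. Second, reconcile this with the $\frac{mn}{2}$ bound of Theorem 4.2: the discrepancy $\frac{mn}{2}-\frac{n_o(P_n\Box T)}{2}=\frac{mn-2n_e(T)-(n-2)n_o(T)}{2}=n_o(T)$ (using $m=n_o(T)+n_e(T)$), so Theorem 4.2 is \emph{not} tight and a sharper construction is needed. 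Thus the real work is an induction on $n(T)$: base case $n(T)=1$ gives $P_n$, done trivially; base case $n(T)=2$ gives $P_n\Box P_2$, handled by Lemma 5.1 since $n\geq 4$; inductive step removes a leaf $u$ with neighbor $v$, takes the guaranteed decomposition of $P_n\Box T'$, and extends it across the new fiber-plus-matching gadget. The main obstacle I anticipate is precisely this extension: one must re-route some of the old paths that end on the copy of $P_n$ over $v$ (there are $n-2$ "long" horizontal paths and a couple of short ones through that fiber) so that they absorb the $n$ new rungs and the $n-1$ new fiber edges while creating exactly $n_o(\text{new fiber})$ fewer paths than the naive "add $P_n\Box P_2$ separately" count — the parity bookkeeping of which old path-ends are odd versus even in $T$ versus $T'$ is the delicate part, and it is here that the explicit fiber constructions of Theorem 3.1 (Cases 1–3) should be adapted rather than used as a black box.
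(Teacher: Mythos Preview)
Your plan has a genuine gap: the inductive extension step --- re-routing old paths across the new fiber-plus-matching gadget --- is the entire content, and you have not carried it out. You correctly identify it as ``the main obstacle'' and then stop. Without an explicit construction (which depends on the parity of $d_{T'}(v)$ and on exactly which path-ends of the old decomposition sit on the $v$-fiber), there is no proof here, only a hope that the bookkeeping works out.

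More to the point, the paper does \emph{not} proceed by induction on $T$. You came within an inch of the actual argument and then turned away. You correctly observed that invoking the \emph{statement} of Theorem~4.2 gives only $\frac{mn}{2}$, which is too weak. The fix is not to abandon that route for leaf-induction, but to open up the \emph{proof} of Theorem~4.2: take the path decomposition $\mathcal{P}(T)=\{Q_1,\ldots,Q_{p(T)}\}$ with $p(T)=\frac{n_o(T)}{2}$ from Theorem~2.2, assign real/virtual vertices as in Section~4, and decompose $G$ into the pieces $Q_i\boxdot P_n$. Now, instead of bounding each piece by Theorem~3.1 (which gives $\frac{(l_i-s_i)n}{2}$ and leads to the slack you noticed), bound each piece by Lemma~5.1, which gives \emph{exactly} $p(P_{l_i-s_i}\Box P_n)=n+(l_i-s_i)-4$ since $n\geq 4$. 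Summing over $i$ and using $\sum_i(l_i-s_i)=m$ yields
\[
p(G)\;\leq\;\sum_{i=1}^{p(T)}\bigl(n+(l_i-s_i)-4\bigr)\;=\;(n-4)\,\frac{n_o(T)}{2}+m\;=\;\frac{n_o(G)}{2},
\]
matching your own formula $n_o(G)=2m+(n-4)n_o(T)$. So Lemma~5.1 is not merely a base case; it is the per-piece sharpening that makes the whole Section~4 machinery tight.

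One small arithmetic slip: the discrepancy $\frac{mn}{2}-\frac{n_o(G)}{2}$ is $n_o(T)+\frac{(n-2)}{2}n_e(T)$, not $n_o(T)$; you dropped the $n_e(T)$ term when substituting $m=n_o(T)+n_e(T)$.
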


\begin{proof} Let $G=P_n\Box T$, $V(P_n)=\{u_1,u_2,\cdots,u_{n}\}$ and $V(T)=\{v_1,v_2,\cdots,v_m\}$.
As we have seen before, $$p(G)\geq max\{\lceil\frac{n_o(G)}{2}\rceil, \lceil\frac{\Delta(G)}{2}\rceil\}.$$

\vspace{2mm} Since $d_G(u_i,v_j)=d_{P_n}(u_i)+d_{T}(v_j)\leq 2+(m-1)$, we have $\Delta(G)\leq m+1$ and $$n_o(G)=2(m-n_o(T))+(n-2)n_o(T)=2m+(n-4)n_o(T).$$ Based on the above two facts, by a simple calculation it yields $\Delta(G)\leq n_o(G)$ for $n\geq 4$. Therefore, $p(G)\geq m+\frac{(n-4)n_o(T)}{2}$.

\vspace{2mm} It remains to show that $$p(G)\leq m+\frac{(n-4)n_o(T)}{2}.$$

Let $\mathcal{P}(T)=\{Q_i:\ 1\leq i\leq \frac{n_o(T)} 2\}$ be a decomposition of $T$ as given in the beginning of the proof of Theorem 4.2, where each vertex of $T$ assigned as a real vertex of a path in $\mathcal{P}(T)$ exactly once, and a virtual vertex in the other appearance. Similarly, let $Q_i$ has $l_i$ vertices and $s_i$ virtual vertices for each $i$. Then $E(T)=\cup_{i=1}^{p(T)} E(Q_i\boxdot P_n)$. Based on the above facts, it follows that $\sum_{i=1}^{p(T)}(l_i-s_i)=m$. By Lemma 4.1 and Lemma 5.1, $p(Q_i\boxdot P_n)= p(P_{l_i-s_i}\Box P_n)$ and $p(P_n\Box P_{l_i-s_i})=n+l_i-s_i-4$ where $n\geq 4$. Moreover, by Theorem 3.1,
{\footnotesize
\begin{eqnarray*}
p(G)&\leq& \sum_{i=1}^{p(T)} p(Q_i\boxdot P_n))\\
&=& \sum_{i=1}^{p(T)} p(P_{l_i-s_i}\Box P_n) \\
&=&\sum_{i=1}^{p(T)}n+l_i-s_i-4\\
&=&p(T)(n-4)+\sum_{i=1}^{p(T)}(l_i-s_i)\\
&=&m+\frac{(n-4)n_o(T)}{2}.
\end{eqnarray*}}
\end{proof}
%The proof is now completed.

\begin{theorem} Let $H$ be an even graph of order $n$. For any positive integer $m\geq 2$, $p(P_m \Box H)= \frac{n_o(P_m \Box H)}{2}$.
\end{theorem}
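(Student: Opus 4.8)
The plan is to reduce the statement to the already-established results about Cartesian products with paths. Write $H$ as an even graph, so every vertex of $H$ has even degree and $n_o(H)=0$. In $G=P_m\Box H$, a vertex $(u_i,v_j)$ has degree $d_{P_m}(u_i)+d_H(v_j)$, and since $d_H(v_j)$ is always even, the parity of $d_G(u_i,v_j)$ equals the parity of $d_{P_m}(u_i)$. Hence the odd vertices of $G$ are exactly the copies of $v_j$ over the two endpoints $u_1,u_m$ of $P_m$, giving $n_o(G)=2n$, so the target bound is $p(G)\le n$. The lower bound $p(G)\ge n_o(G)/2=n$ is immediate. So the whole content is the upper bound $p(P_m\Box H)\le n$.

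First I would dispose of small cases or degenerate ones ($H$ with no edges is trivial; if $H$ itself is connected and even, a natural starting point is an Eulerian-type structure). The main idea is to use an Euler tour / closed-trail decomposition of $H$: since $H$ is even, $E(H)$ decomposes into closed trails, and more usefully each connected component being even means $H$ has an Eulerian circuit if connected. I would then take a path decomposition of $H$ of the form given by Lemma 2.1, but note that for an even graph that lemma gives paths all of whose endpoints are even, i.e. only even-even paths, with $|\mathcal{P}(H)|=n_e(H)=n$. Feeding this into the construction of Theorem 3.1: for each even-even path we are in Case 3, which produces $t+1$ paths of $G$ from a cyclic family of $t$ even-even paths of $H$ covering $\sum l_j$ edges, and crucially Case 3 contributes no ``extra'' end-vertex overhead because $n_o^1(H)=n_o^2(H)=0$. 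Running the bookkeeping at the end of the proof of Theorem 3.1 with $n_o(H)=0$ gives $p(P_m\Box H)\le p(H)+(m-1)\cdot 0 = p(H)=n_e(H)=n$, which is exactly what we want — but one must check this is legitimate, i.e. that the ``$+n_e(H)$'' term is $n$ and not more, and that the Case 3 cyclic grouping of the even-even paths of $\mathcal{P}(H)$ can actually be arranged (the sequences $P_{y_j,y_{j+1}}$ with $y_{t+1}=y_1$ must genuinely partition the even-even paths of $\mathcal{P}(H)$ into cycles, which requires that at each even vertex the two incident path-ends can be paired consistently around closed walks).

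So the concrete steps are: (1) compute $n_o(P_m\Box H)=2n$ and record the lower bound; (2) invoke Lemma 2.1 for $H$ even to get a decomposition $\mathcal{P}(H)$ into $n$ even-even paths where every vertex is an endpoint of exactly two of them; (3) organize these even-even paths into closed ``necklaces'' $(P_{y_1,y_2},P_{y_2,y_3},\dots,P_{y_t,y_1})$ by following, at each even vertex, from one incident path to the other — this is exactly the hypothesis of Case 3 of Theorem 3.1; (4) apply the Case 3 construction to each necklace to obtain the paths of $P_m\Box H$; (5) total the count: each necklace of $t$ paths yields $t+1$ paths of $G$, and since there is (at most) one necklace per... no — sum over necklaces of $(t_k+1) = n + (\text{number of necklaces})$, so I need the number of necklaces to be controlled. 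Here is where I expect the real obstacle: the naive Case 3 count gives an extra $+1$ per closed necklace, so I must show the necklaces can be chosen so that their number is absorbed, or more likely re-run the final inequality of Theorem 3.1 which already absorbs everything into $m\frac{n_o(H)}{2}+n_e(H)=n_e(H)=n$ — meaning the ``$+1$ per component'' is already accounted for in how the $m$ parallel copies plus connectors were counted. I would double-check that accounting carefully: the statement ``$p(G)\le p(H)+(m-1)\frac{n_o^1(H)}{2}+(m-1)\frac{n_o^2(H)}{2}$'' from Theorem 3.1's proof, specialized to $n_o^1=n_o^2=0$, literally gives $p(P_m\Box H)\le p(H)$. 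Combined with $p(H)=n_e(H)=n$ (Lemma 2.1(3) with $n_o(H)=0$), we get $p(P_m\Box H)\le n$, matching the lower bound, so $p(P_m\Box H)=n=\frac{n_o(P_m\Box H)}{2}$. The hard part is verifying that Case 3's construction indeed applies to all the even-even paths simultaneously and that its path-count is correctly captured by that displayed inequality with no hidden additive term; once that is confirmed the theorem follows immediately.
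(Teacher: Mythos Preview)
Your approach is correct and is essentially identical to the paper's own proof: compute $n_o(P_m\Box H)=2n$ for the lower bound, apply Lemma~2.1 to the even graph $H$ to get $n$ even--even paths with every vertex an endpoint of exactly two of them, and then invoke Case~3 of Theorem~3.1 to build the decomposition of $P_m\Box H$ with $n$ paths. Your caution about the ``$+1$ per necklace'' is well placed---the paper's statement that Case~3 produces $t+1$ paths appears to be a typo, as the listed construction (one long zig-zag path per $j\in\{1,\dots,t\}$, with all connector edges absorbed since the sequence closes up) actually produces exactly $t$ paths; this is why the final displayed inequality $p(G)\le |\mathcal P(H)|+(m-1)\frac{n_o^1(H)}{2}+(m-1)\frac{n_o^2(H)}{2}$ specializes cleanly to $n$ with no extra additive term. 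One small notational slip: when you write ``$p(H)=n_e(H)=n$'' you mean $|\mathcal P(H)|$ from Lemma~2.1, not the true minimum $p(H)$ (which for an even graph could be smaller); the paper makes the same abuse, and it is harmless here.
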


\begin{proof} Let $G=P_m \Box H$, $V(P_m)=\{u_1,u_2,\cdots,u_{m}\}$ and $V(T)=\{v_1,v_2,\cdots,v_n\}$. As we have seen before,
$$p(G)\geq max\{\lceil\frac{n_o(G)}{2}\rceil, \lceil\frac{\Delta(G)}{2}\rceil\}.$$

Since $\Delta(G)= n+1$ and $n_o(G)=2n.$ we have $\Delta(G)\leq n_o(G)$. Therefore, $p(G)\geq n$.

Next we show that $p(G)\leq n$. Since $H$ is an even graph, $n_o(H)=0$. By Lemma 2.1, there exists a path decomposition $\mathcal{P}(H)$ of the edges of $H$ with $|\mathcal{P}(H)|=n$ such that each vertex is the end vertex of exactly two paths of $\mathcal{P}(H)$. By a similar argument (Case 3) as in the proof of Theorem 3.1, we can find a decomposition $\mathcal{P}(G)$ of $G$ with $|\mathcal{P}(G)|=n$.

\vspace{2mm} This proves $p(G)=n$.
\end{proof}

\noindent {\bf Data availability statement}

\vspace{2mm}\noindent Data sharing not applicable to this article as no data sets were generated or analysed during the current study.

\vspace{3mm}\noindent {\bf Declarations of competing interest}

The authors declare that they have no known competing financial interests or personal relationships that could have appeared to influence the work reported in this paper.

\end{document}